\date{\today}
\theoremstyle{plain}
\newtheorem{teo}{Theorem}[section]
\newtheorem{lema}[teo]{Lemma}
\newtheorem{prop}[teo]{Proposition}
\newtheorem{coro}[teo]{Corollary}
\theoremstyle{definition}
\newtheorem{defi}[teo]{Definition}
\newtheorem{obs}[teo]{Remark}
\newtheorem{obss}[teo]{Remarks}
\newtheorem{ejem}[teo]{Example}
\def\End{\mathop{\rm End_{\mathcal{C}_m}}\nolimits}
\def\C{\mathop{\mathcal{C}}\nolimits}
\title[The coloured mutation class of $\mathbb{A}_n$ quivers]{The coloured mutation class of $\mathbb{A}_n$ quivers}
\author[V. Gubitosi]{Viviana Gubitosi}
\address{Instituto de Matem\'{a}tica y Estad\'{\i}stica Rafael Laguardia, Facultad de Ingenier\'{\i}a - UdelaR, Montevideo, Uruguay, 11200 }
\email{gubitosi@fing.edu.uy}
\author[R. Parra]{Rafael Parra}
\address{Instituto de Matem\'{a}tica y Estad\'{\i}stica Rafael Laguardia, Facultad de Ingenier\'{\i}a - UdelaR, Montevideo, Uruguay, 11200 }
\email{rparra@fing.edu.uy}
\author[C. Qureshi]{Claudio Qureshi}
\address{Instituto de Matem\'{a}tica y Estad\'{\i}stica Rafael Laguardia, Facultad de Ingenier\'{\i}a - UdelaR, Montevideo, Uruguay, 11200 }
\email{cqureshi@fing.edu.uy}
\keywords{coloured  mutation, coloured quivers }
\begin{document}
\maketitle

\begin{abstract}
In this paper we give an explicit and pure combinatorial description of the $m$-coloured quivers that appears in the $m$-coloured mutation class of a quiver of type $\mathbb{A}_n$. The $m$-coloured mutation defined  by Buan and Thomas in \cite{BT} generalizes the well-known quiver mutation of Fomin and Zelevinsky \cite{FZ}. In particular, our description  generalizes a result of Buan and Vatne, \cite{BV}, which we recover when $m=1$.
\end{abstract}

%

\section*{Introduction}

Given an integer $m\geq 1$, $m$-coloured quivers or coloured quivers were defined  in 2008 by Buan and Thomas  \cite{BT}. A coloured quiver $Q$ is a finite quiver  whose arrows have an associated colour $c$ in $\{0,\dots,m\}$ and such that $Q$ contains no loops and satisfies two additional properties called monochromaticity and skew-symmetry. Every acyclic quiver $Q$ can be seen as a coloured quiver, by regarding each arrow of $Q$ as an arrow of colour $0$, and then adding an arrow of color $m$ in the opposite direction.

On such coloured quivers, they also define an operation called coloured quiver mutation 
and show that it is compatible with mutation of $m$-cluster tilting objects. When $m=1$, the coloured mutation of the induced coloured quiver corresponds to the mutation defined by Fomin and Zelevinsky in \cite{FZ} and denoted by FZ-mutation. Since its definition quiver mutation has been the subject of several investigations; see, for instance \cite{BMR,BPRS,BR,BTo}. The problem of describing the FZ-mutation classes of quivers of different types was addressed by Vatne for type $\mathbb{D}_n$\cite{Va}, by Bastian for type $\mathbb{\tilde{A}}_n$ \cite{Ba} and by Buan and Vatne for type $\mathbb{A}_n$ \cite{BV}.  We recover this last result by taking $m=1$.\\


From an algebraic point of view, coloured quiver mutation gives some information on the $m$-cluster tilted algebras, i.e., algebras of the form $\End(T)$, for $T$ an $m$-cluster tilting object in an $m$-cluster category $\C_m$. Roughly speaking, given an hereditary finite dimensional algebra $H$ over an algebraically closed field, the $m$-cluster category $\C_m$ is obtained from the derived category $\mathcal{D}^b(H)$ by identifying the composition $[m]:=[1]^m$ of the shift functor $[1]$ with the Auslander - Reiten translation $\tau$.  Buan and Thomas assigned to every $m$-cluster tilting object $T$ a coloured quiver $Q_T$ where the vertices are in correspondence with the indecomposable direct summands of $T$ and  to determine the coloured arrows, they use the exchange triangles. It is known that any $m$-cluster tilting object can be reached from any other $m$-cluster tilting object by a sequence of exchanges \cite{Zhou}. Thomas, in \cite{T07}, showed that $m$-cluster tilting objects  are in bijective correspondence with the $m$-clusters associated with a finite root system  by Fomin and Reading in \cite{FR05}.


It is also observed in \cite{BT}  that the $0$-coloured part of the quiver $Q_T$ coincides with the Gabriel quiver of the $m$-cluster-tilted algebra $\End(T)$. Therefore, as a consequence we have that  Gabriel  quivers of  $m$-cluster-tilted algebras
can be combinatorially determined by applying repeated coloured mutations. See \cite[ Corollary 7.2]{BT}.\\

Two coloured quivers $Q_1$ and $Q_2$ are mutation equivalent if $Q_1$ can be obtained from $Q_2$ by some sequence of coloured mutations, and viceversa. An equivalence class is called a coloured mutation class. It was proved by Torkildsen  \cite{Tork}  that the coloured mutation class of a connected acyclic quiver $Q$ is finite if and only if $Q$ is either of Dynkin or extended Dynkin type, or has at most two vertices. In addition, for Dynkin type $\mathbb{A}_n$,  Torkildsen has found a formula for the number of elements in the mutation class, using a connection to the classical cell-growth problem \cite{Tork2}.\\


The aim of this paper is to give an explicit description of the coloured mutation class of $\mathbb{A}_n$ coloured quivers. That is, we will present the set of coloured quivers which can be obtained by iterating coloured mutations on a coloured quiver whose underlying graph is of Dynkin type $\mathbb{A}_n$. It turns out that the quivers in this mutation class are easily recognisable. \

In Definition \ref{la clase} we present a class $\mathcal{Q}_n^m$ of $m$-coloured quivers with $n$ vertices which includes all colourations of a quiver of type $\mathbb{A}_n$. The main result of this paper can be stated as follows:

\subsection*{Theorem A}\textit{ A connected  $m$-coloured quiver $Q$ is mutation equivalent to  $\mathbb{A}_n$ if and only if  $Q$ belongs to the class $\mathcal{Q}_n^m$. }

\medskip

In particular, specializing to the case $m=1$, we recover known results of \cite{BV}; and in addition, our description of the coloured mutation class of $\mathbb{A}_n$ coloured quivers gives a complete description of quivers of $m$-cluster-tilted algebras of type $\mathbb{A}_n$. These quivers are already known, see \cite{Murphy}, but the method used in this paper to obtain the description is purely combinatorial, and no prerequisites are needed.\\

This paper is organized as follows. After a preliminary section,
in which we fix the notations and recall some concepts needed
later, section 2 is devoted to recall  the definition of coloured quivers and coloured  mutation. In section 3, we describe the special class of coloured quivers with $n$ vertices, denoted by $\mathcal{Q}_n^m$,  which will turning  out the coloured mutation class of type $\mathbb{A}_n$.  We finish section 3  illustrating one of the most noticeable differences between the FZ-mutation (i.e., a $1$-coloured mutation) and the $m$-coloured mutation (for $m\geq 2$). Namely, the FZ-mutation is an involution however the $m$-coloured mutation is not even invertible in general. Section 4 is dedicated to the proof of our main result. In sections 5 and 6 we finish with some consequences, among which we recover the known results mentioned above.


\section{Preliminaries}

A  \textit{quiver} (or digraph) $Q$ is the data of two sets, $Q_0$ (the vertices) and $Q_1$ (the arrows); and two maps $s, t : Q_1 \rightarrow Q_0$ that assign to each arrow $\alpha$ its source $s(\alpha)$ and its target $t(\alpha)$. We write $\alpha: s(\alpha)\longrightarrow   t(\alpha)$ for the arrow $\alpha$ from $s(\alpha)$ to $t(\alpha)$. If either $s(\alpha)=i$ or $t(\alpha)=i$, we say that $\alpha$ is incident with the  vertex $i$. For any vertex $i$ in $Q_0$, the \textit{valency} of $i$ (in $Q$) is the number of neighbouring vertices,  i.e., the number of vertices $j\neq i$ such that there exists an arrow $\alpha \in Q_1$ with either $s(\alpha)=i$ and  $t(\alpha)=j$ or $s(\alpha)=j$ and $t(\alpha)=i$.\\

We say that $Q$ is \textit{simple} if there is at most one arrow between two distinct vertex.  In this case, each arrow $\alpha$ is determined by its source $s(\alpha)=i$ and its target $t(\alpha)=j$ and it is usual to denote $\alpha = ij$. A \textit{path} of length $k\geq 0$ in $Q$ is a sequence of arrows $\alpha_1\alpha_2\cdots \alpha_{k}$ such that $\alpha_i=x_i \rightarrow x_{i+1}\in Q_1$ for $1\leq i \leq k$.  A path is called {\it simple} when the vertices $x_1, x_2, \ldots, x_{k+1}$ are pairwise distinct. The path is called {\it closed} when $x_1=x_{k+1}$. A \textit{$k$-cycle} ($k\geq 3$) in  $Q$  is a closed path $\alpha_1\alpha_2\cdots \alpha_{k}$; i.e., $s(\alpha_1)=t(\alpha_{k})$ such that $\alpha_1\alpha_2\cdots \alpha_{k-1}$ is a simple path.
In the case where $Q$ is a simple quiver, we denote by $x_1x_2\cdots x_{k+1}$ the path $\alpha_1\alpha_2\cdots \alpha_{k}$ and by $(x_1x_2\cdots x_{k})$ the $k$-cycle $x_1x_2\cdots x_{k}x_1$. \\

Remember that a subquiver $Q'$ of $Q$ is called {\it induced} if every $\alpha \in Q_1$ such that $s(\alpha),t(\alpha)\in Q'_{0}$ satisfies $\alpha \in Q'_1$.  An \textit{induced cycle} is a cycle which is also an induced subquiver. A \textit{hole} is an induced cycle of length at least four. A quiver $Q$ is called \textit{free-hole}  if it does not contains holes. \

Let $I=\{x_1,\ldots,x_k\} \subseteq Q_0$. We denote by $Q[x_1,\ldots,x_k]$ (or just by $Q[I]$) the subquiver of $Q$ induced by $I$.  A \textit{complete} quiver is a quiver in which every pair of distinct vertices is connected by a pair of unique arrows (one in each direction).

\begin{figure}[H]

\[
\begin{array}{ccc}
  \xymatrix@R=30pt@C=15pt{ &&& . \ar@<0.3ex>[rd] \ar@<0.3ex>[dl] &&  &   \\   && . \ar@<0.3ex>[ur] \ar@<0.3ex>[rr] && . \ar@<0.3ex>[ll] \ar@<0.3ex>[lu] } &  & \xymatrix@R=30pt@C=15pt{ . \ar@<0.3ex>[d] \ar@<0.3ex>[rr] \ar@<0.3ex>[rrd] && . \ar@<0.3ex>[ll] \ar@<0.3ex>[dll] \ar@<0.3ex>[d]  \\   . \ar@<0.3ex>[urr] \ar@<0.3ex>[rr] \ar@<0.3ex>[u] &&  . \ar@<0.3ex>[u] \ar@<0.3ex>[ull] \ar@<0.3ex>[ll] }
\end{array}
\]

\caption{Complete quivers  with $3$ and $4$ vertices respectively.}
\end{figure}
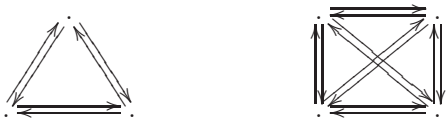

A \textit{clique} is a subset of vertices $I\subseteq Q_0$ such that $Q[I]$ is a complete quiver. A clique with $k$ vertices is called a $k$-clique. In the following we will identify, as usual, the $k$-clique $K$ with its induced complete quiver $Q[K]$. \\

A maximum clique in $Q$ is a clique with the maximum number of vertices and the \textit{clique number} of $Q$, denoted by $\omega(Q)$, is the number of vertices in a maximum clique.

%
%


\section{ Coloured quivers and coloured mutations}
We recall in this section the definitions of  coloured quivers and coloured quivers mutation given by Buan and Thomas in \cite{BT}.

\subsection{Coloured quivers}

Let $m$ be a positive integer. An $m$-coloured quiver  $Q$ is the data of two sets, $Q_0$ (the vertices) and $Q_1$ (the arrows); two maps $s, t : Q_1 \rightarrow Q_0$ that assign to each arrow $\alpha$ its source $s(\alpha)$ and its target $t(\alpha)$; and a colouration function $\kappa:Q_1 \rightarrow \{0,1,\ldots, m\}$ which associates to each arrow  $\alpha$ the colour $\kappa(\alpha)$. We write $\alpha: s(\alpha)\overset{(c)}{\longrightarrow}  t(\alpha)$ for the arrow $\alpha$ from $s(\alpha)$ to $t(\alpha)$ of colour $\kappa(\alpha)=c$.\\



Let $q_{ij}^{(c)}$ denote the number of arrows from
$i$ to $j$ of colour $c$. We will say that $Q$ has no loops if $q_{ii}^{(c)} = 0$ for all $c$. The quiver  $Q$ is said to be monochromatic if $q_{ij}^{(c)} \neq 0$, then $q_{ij}^{(c')} = 0$ for $c \neq c'$. Clearly every simple quiver is monochromatic. Finally, we say that $Q$ is skew-symmetric if $q_{ij}^{(c)} = q_{ji}^{(m-c)}$. \\

From now on, we will consider coloured quivers with the above three additional conditions.\

%
%
%
%
%

\subsection{Coloured  mutation}

In such a  coloured quiver $Q$,  we have the following operation $\mu_j$ called coloured quiver mutation at  $j$.  Let $j$ be a vertex in $Q$ and let $\mu_j(Q) = \widetilde{Q}$ be the coloured quiver such that

$$\tilde{q}_{ik}^{(c)} =
\begin{cases}  q_{ik}^{(c+1)} & \text{  if $j =k$} \\
		 q_{ik}^{(c-1)} &\text{  if $j=i$} \\
		 \max \{0, q_{ik}^{(c)} - \sum_{t \neq c} q_{ik}^{(t)} + (q_{ij}^{(c)} - q_{ij}^{(c-1)}) q_{jk}^{(0)}
		 + q_{ij}^{(m)} (q_{jk}^{(c)}  -q_{jk}^{(c+1)}) \} & \text{  if $i \neq j \neq k$}
                 \end{cases}
$$

They also  give an alternative description of coloured quiver mutation at vertex $j$ that we recall here.

\subsubsection*{  \hspace*{4cm}    Alternative algorithm for coloured mutation:}

\begin{enumerate}
\item For each pair of arrows
$$ \xymatrix {i \ar^{(c)}[r] & j\ar^{(0)}[r] &k }$$
with $i\ne k$, the arrow from $i$ to $j$ of arbitrary colour $c$, and the arrow from
$j$ to $k$ of colour $0$, add a pair of arrows: an arrow from $i$ to $k$
of colour $c$, and one from $k$ to $i$ of colour $m-c$.
\item If the quiver is not longer monochromatic, because for some pair of vertices $i$ and $k$ there are arrows from
$i$ to $k$ which have two different colours, cancel the same number of
arrows of each colour, until the  monochromaticity property is satisfied.
\item Add one to the colour of any arrow going into $j$ and subtract one
from the colour of any arrow going out of $j$.
\end{enumerate}

Note that  the operations over the colours performed at step $(3)$ have to be done modulo $m+1$.\\

Buan and Thomas proved that the above algorithm is well-defined and
correctly calculates coloured
quiver mutation as previously defined by themselves.\\


\subsection{Mutation class of type $\mathbb{A}_n$}

We will say that the underlying graph of a coloured quiver  $Q$ (with no loops, monochromatic and  skew-symmetric)  is the graph obtained by keeping one edge  $\xymatrix{i \ar@{-}[r] & j}$ for each pair of arrows
$\xymatrix{i \ar@<0.6ex>^{(c)}[r] & j \ar@<0.6ex>^{(m-c)}[l]}$ of the coloured quiver $Q$.\\

In the following, we will consider coloured quivers whose underlying graph is a Dynkin graph of type $\mathbb{A}_n$.
We will refer to these quivers as coloured quivers of type $\mathbb{A}_n$ or $\mathbb{A}_n$-quivers for short.\\

\begin{figure}[H]
\begin{center}
\[\xymatrix{1 \ar@{-}[r] & 2 \ar@{-}[r] & \cdots \ar@{-}[r] & n-1 \ar@{-}[r] & n}
\]
\caption{The Dynkin graph   $\mathbb{A}_n$}
\label{quiverAn}
\end{center}
\end{figure}

Two quivers are said to be mutation equivalent if one can be obtained from the other by some sequence of coloured mutations, and viceversa. An equivalence class will be called a mutation class. \


Let $Q$ be the following  coloured quiver with arrows only of colour $0$ and $m$ whose underlying graph is a Dynkin graph of type $\mathbb{A}_n$.

$$\xymatrix{1 \ar@<0.6ex>^{(0)}[r] & 2 \ar@<0.6ex>^{(m)}[l] \ar@<0.6ex>^{(0)}[r] & 3  \ar@<0.6ex>^{(m)}[l]  \ar@<0.6ex>^{(0)}[r] & \cdots   \ar@<0.6ex>^{(m)}[l] \ar@<0.6ex>^{(0)}[r] & n  \ar@<0.6ex>^{(m)}[l]  }$$

We call the mutation class of type $\mathbb{A}_n$ to the set of all quivers
mutation equivalent to $Q$.\
By a result of Torkildsen \cite{Tork, Tork2}, we know that this mutation class is finite. \

%

\begin{ejem}  The following figure  shows all non-isomorphic coloured quivers in the mutation class of type $\mathbb{A}_3$ for $m=2$.

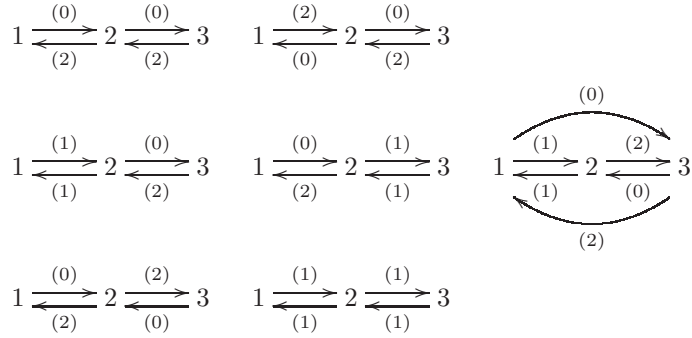
\begin{figure}[H]
\begin{center}
$$\begin{array}{ccc}
  $\xymatrix{1 \ar@<0.6ex>^{(0)}[r] & 2 \ar@<0.6ex>^{(2)}[l] \ar@<0.6ex>^{(0)}[r] & 3  \ar@<0.6ex>^{(2)}[l] }$ & $\xymatrix{1 \ar@<0.6ex>^{(2)}[r] & 2 \ar@<0.6ex>^{(0)}[l] \ar@<0.6ex>^{(0)}[r] & 3  \ar@<0.6ex>^{(2)}[l] }$ &  \\
  $\xymatrix{1 \ar@<0.6ex>^{(1)}[r] & 2 \ar@<0.6ex>^{(1)}[l] \ar@<0.6ex>^{(0)}[r] & 3  \ar@<0.6ex>^{(2)}[l] }$ & $\xymatrix{1 \ar@<0.6ex>^{(0)}[r] & 2 \ar@<0.6ex>^{(2)}[l] \ar@<0.6ex>^{(1)}[r] & 3  \ar@<0.6ex>^{(1)}[l] }$ & $\xymatrix{1 \ar@<0.6ex>^{(1)}[r] \ar@/^5mm/@<1.6ex>^{(0)}[rr]& 2 \ar@<0.6ex>^{(1)}[l] \ar@<0.6ex>^{(2)}[r] & 3  \ar@<0.6ex>^{(0)}[l]  \ar@/^5mm/@<1.6ex>^{(2)}[ll]}$ \\
  $\xymatrix{1 \ar@<0.6ex>^{(0)}[r] & 2 \ar@<0.6ex>^{(2)}[l] \ar@<0.6ex>^{(2)}[r] & 3  \ar@<0.6ex>^{(0)}[l] }$ & $\xymatrix{1 \ar@<0.6ex>^{(1)}[r] & 2 \ar@<0.6ex>^{(1)}[l] \ar@<0.6ex>^{(1)}[r] & 3  \ar@<0.6ex>^{(1)}[l] }$ &
\end{array}$$
\caption{The mutation class of the 2-coloured quiver $\mathbb{A}_3 $}
\label{mutation class A3}
\end{center}
\end{figure}

\end{ejem}

\section{The set $\mathcal{Q}_n^m$}


In this section we define a special class of $m$-coloured quivers with $n$ vertices which will turning  out the coloured mutation class of type $\mathbb{A}_n$.\\

Remember that  a clique is a directed quiver in which every pair of distinct vertices is connected by an unique pair of edges (one in each direction). We denote by $\mathcal{C}_r$ a clique with $r$ vertices. \\

\begin{defi}\label{la clase}

Let $\mathcal{Q}_n^m$ be the class of $m$-coloured simple and connected quivers $Q$ with $n$ vertices and no holes which satisfy the following two additional  conditions:

\begin{enumerate}
\item For each vertex $v$ in $(Q_n^m)_0$ with $z\geq 1$ neighbours, there exists two cliques $\mathcal{C}_r$  and $\mathcal{C}_k$ such that $v \in (\mathcal{C}_r)_0 \cap (\mathcal{C}_k)_0$,  $r+k=z+2$ and $r,k \leq m+2$. In addition, there are not arrows between two vertices $i\in (\mathcal{C}_r)_0 $ and  $j\in (\mathcal{C}_k)_0$.

\begin{figure}[H]
\begin{center}

\[
\xy/r4pc/:{\xypolygon10"A"{~<{}~>{}{}}}
*+{{\scriptstyle \bullet}},

\POS"A6" \drop{\begin{array}{llllll} &&&&& v \end{array}}
\POS"A10" \ar@{-}   "A1"
\POS"A9" \ar@{.}   "A10"
\POS"A8" \ar@{.}   "A9"
\POS"A2" \ar@{-}   "A3"
\POS"A7" \ar@{-}   "A8"
\POS"A7" \ar@{-}   "A1"
\POS"A4" \ar@{-}   "A5"
\POS"A3" \ar@{.}   "A4"
\POS"A7" \ar@{-}   "A0"
\POS"A4" \ar@{-}   "A0"
\POS"A2" \ar@{-}   "A0"
\POS"A1" \ar@{-}   "A0"
\POS"A8" \ar@{-}   "A0"
\POS"A10" \ar@{-}   "A0"
\POS"A5" \ar@{-}   "A2"
\POS"A0" \ar@{-}   "A5"
\POS"A0" \ar@{-}   "A3"
\POS"A8" \ar@{-}   "A10"
\POS"A8" \ar@{-}   "A1"
\POS"A7" \ar@{-}   "A10"
\POS"A2" \ar@{-}   "A4"
\POS"A3" \ar@{-}   "A5"
\POS"A2" \drop{\begin{array}{llllll} &&&&& \mathcal{C}_r  \end{array}}
\POS"A10" \drop{\begin{array}{llllll} &&&&& \mathcal{C}_k  \end{array}}

\endxy   \]

\end{center}
\end{figure}

%

\item For each triangle $$\xymatrix{ v_1 \ar_{(c_{13})}[rd] & & v_2 \ar_{(c_{21})}[ll] \\  &  v_3  \ar_{(c_{32})}[ru] &  }$$

with $v_1,v_2,v_3$ three different vertices belonging to the same clique $\mathcal{C}$ we have that  $$c_{21}+c_{13}+c_{32}=m-1 \text{\ \ or \ \ } (m-c_{21})+(m-c_{13})+(m-c_{32})=m-1.$$

\end{enumerate}

\end{defi}
\vspace*{.5cm}

\begin{obss}\label{Remark color triangulo}

Observe that we do not compute the sum of the colours modulo $m+1$. If a given orientation of the triangle gives $c_{21}+c_{13}+c_{32}=m-1$, the opposite orientation gives $c_{12}+c_{31}+c_{23}=2m+1$.\\

If $c_{13}$ is the smaller of the colours $\{c_{21},c_{13},c_{32},c_{12},c_{31},c_{23}\}$, then the orientation of the triangle induced by  $c_{13}$ is the orientation whose sum of the colours is  $m-1$.
\end{obss}

\begin{obs}
If $Q \in \mathcal{Q}_n^m$, $Q$ is a simple quiver. Then,  for each pair of vertices  $i$ and  $j$  in $Q_0$  there is at most one pair of arrows $\xymatrix{i \ar@<0.6ex>^{(c_{ij})}[r] & j\ar@<0.6ex>^{(m-c_{ij})}[l] }$  with  $c_{ij}\in \{0, 1, \dots, m \}$. 
\end{obs}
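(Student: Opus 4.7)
The statement is essentially an unpacking of the hypothesis combined with the standing assumptions about coloured quivers, so the ``proof'' I propose is a short verification rather than a substantive argument.

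First, I would observe that simplicity of $Q$ is part of the hypothesis: by Definition \ref{la clase}, every $Q \in \mathcal{Q}_n^m$ is required to be simple. From the Preliminaries, simplicity means that for any ordered pair of distinct vertices $(i,j)$ there is at most one arrow with source $i$ and target $j$; in particular $q_{ij}^{(c)} \in \{0,1\}$ for each colour $c$.

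Next I would invoke the two standing conditions imposed on every coloured quiver considered in the paper, namely monochromaticity and skew-symmetry. Monochromaticity says that if $q_{ij}^{(c)} \neq 0$ then $q_{ij}^{(c')} = 0$ for every colour $c' \neq c$, so at most one colour $c_{ij} \in \{0,1,\dots,m\}$ can label an arrow from $i$ to $j$. Combined with simplicity, this shows that between $i$ and $j$ (in that direction) there is either no arrow at all, or exactly one arrow of a single well-defined colour $c_{ij}$.

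Finally, I would apply the skew-symmetry identity $q_{ij}^{(c)} = q_{ji}^{(m-c)}$: whenever there is an arrow $i \to j$ of colour $c_{ij}$, there is automatically a unique arrow $j \to i$ of colour $m - c_{ij}$, and no other arrows between $i$ and $j$. This yields exactly the stated conclusion that each pair of vertices $\{i,j\}$ in $Q_0$ supports at most one pair $\xymatrix{i \ar@<0.6ex>^{(c_{ij})}[r] & j\ar@<0.6ex>^{(m-c_{ij})}[l]}$ of oppositely-oriented arrows. Since each of the three ingredients (simplicity from the definition, monochromaticity and skew-symmetry from the standing convention) contributes one direct logical step, there is no real obstacle in the proof; the only point worth emphasising is that the colour $c_{ij}$ of such a pair is well-defined precisely because of monochromaticity together with simplicity.
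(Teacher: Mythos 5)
Your verification is correct and matches the paper's intent: the remark carries no proof in the paper precisely because it is an immediate unpacking of the simplicity requirement in Definition \ref{la clase} together with the standing monochromaticity and skew-symmetry conventions, which is exactly the route you take. Nothing is missing.
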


Since all our coloured quivers are skew-symmetric, the colour of every arrow $\xymatrix{v_i \ar[r]^{(c_{ij})} & v_j}$ determines the colour of the arrow $\xymatrix{v_j \ar[r]^{(c_{ji})} & v_i}$ according to the  equation $c_{ji}=m-c_{ij}$. Then, it will cause no confusion if  we  simply draw $\xymatrix{v_i \ar[r]^{c_{ij}} & v_j}$ instead of $\xymatrix{v_i \ar@<0.6ex>^{(c_{ij})}[r] & v_j\ar@<0.6ex>^{(m-c_{ij})}[l] }$.\\

\begin{figure}[H]
\begin{center}
\[
\xymatrix@R=25pt@C=15pt{ \cdot \ar[rr]^{1} && \cdot \ar[d]^0 && &&  & \cdot \ar[dr]^0 & \\
\cdot \ar[u]^2 \ar[rru]^{0} && \cdot \ar[ll]^1 \ar[rr]^0 \ar[llu]^{ 0 } \ar[rrd]_1  && \cdot \ar[rr]^2 \ar[d]^0 && \cdot \ar[ru]^1 && \cdot \ar[ll]^0 \\
&& \cdot \ar[u]^0 \ar[rru]^1  && \cdot  \ar[ll]^0 \ar[rr]^0 && \cdot \ar[rr]^1 \ar[dl]^0 && \cdot \\
&& && & \cdot \ar[lu]^1 & && 
}
\]

\caption{A $2$-coloured quiver in the class $\mathcal{Q}_{13}^2$ }
\label{Q132}
\end{center}
\end{figure}
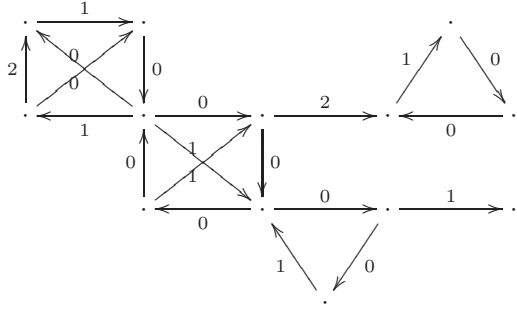

Here are some elementary properties of this class.\\

\begin{lema} \label{triangulos admisibles}
Let $v$, $v_1$ and $v_2$ be part of a triangle belonging to the class $\mathcal{Q}_n^m$. We label the colours of the arrows as in the following diagram:

$$\xymatrix@R=4pt@C=10pt{ && v_1 \ar[dd]^{c_{12}}\\
v \ar[rru]^{c_1} \ar[rrd]_{c_2}\\
&& v_2}$$

Then:

\begin{enumerate}
  \item $c_1\neq c_2$.
  \item If $c_1< c_2$ then $c_{12}=c_2-c_1-1$.

\end{enumerate}

\end{lema}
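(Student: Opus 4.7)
The plan is to unpack Definition \ref{la clase}(2) directly for the given triangle, using skew-symmetry to read off the colours around the two opposite cyclic orientations, and then use the fact that every colour must lie in $\{0,1,\dots,m\}$ to eliminate impossible cases.

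First I would write down the colours of the three arrows of the triangle in both cyclic orientations. In the orientation $v\to v_1\to v_2\to v$ the three arrows carry colours $c_1$, $c_{12}$, and (by skew-symmetry) $m-c_2$. In the opposite orientation $v\to v_2\to v_1\to v$ they carry colours $c_2$, $m-c_{12}$, and $m-c_1$. By Definition \ref{la clase}(2), exactly one of the following holds:
\begin{itemize}
\item[(A)] $c_1+c_{12}+(m-c_2)=m-1$, i.e.\ $c_{12}=c_2-c_1-1$;
\item[(B)] $c_2+(m-c_{12})+(m-c_1)=m-1$, i.e.\ $c_{12}=c_2-c_1+m+1$.
\end{itemize}
(Note that Remark \ref{Remark color triangulo} already points out that the sums for the two orientations add up to $3m$, so at most one of them can equal $m-1$.)

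For part (1), I would argue by contradiction: if $c_1=c_2$, then (A) forces $c_{12}=-1$ and (B) forces $c_{12}=m+1$, both violating $c_{12}\in\{0,\dots,m\}$; hence $c_1\neq c_2$. For part (2), assuming $c_1<c_2$, option (B) would give $c_{12}=c_2-c_1+m+1\ge m+2$, again outside the allowed range; therefore only (A) can hold, yielding $c_{12}=c_2-c_1-1$, as desired.

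There is no real obstacle here: the lemma is an immediate consequence of the triangle condition in the definition of $\mathcal{Q}_n^m$ together with the skew-symmetry convention $c_{ji}=m-c_{ij}$. The only point requiring minimal care is to make sure the orientation used in Definition \ref{la clase}(2) is read consistently, which is handled precisely by Remark \ref{Remark color triangulo}.
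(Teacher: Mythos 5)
Your proof is correct and follows essentially the same route as the paper: unpack condition (2) of Definition \ref{la clase} via skew-symmetry and use that all colours lie in $\{0,\dots,m\}$ to force $c_{12}=c_2-c_1-1$. The only difference is cosmetic: the paper disposes of the second orientation with a ``without loss of generality,'' whereas you rule it out explicitly (getting $c_{12}=c_2-c_1+m+1>m$), which is slightly more careful but not a different argument.
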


\begin{proof}
Without loss of generality we can assume that $c_1+c_{12}+m-c_2=m-1$. In particular, we have that $0 \leq c_1,c_{12},m-c_2< m$. If $c_1=c_2$ then  $c_{12}=-1$, a contradiction. Moreover, if we assume that $c_1< c_2$, then $c_{12}=m-1-c_1-(m-c_2)=c_2-c_1-1$.
\end{proof}

\begin{lema} \label{completo mas grande}
Let $v$, $w_1, \cdots, w_{m+1}$ the $m+2$ vertices of a $(m+2)$-clique $\mathcal{C}_{m+2} $ in $\mathcal{Q}_n^m$. We label the colours of the arrows leaving $v$ as in the following diagram:

\[
\xy/r3pc/:{\xypolygon6"A"{~<{}~>{}{}}}

\POS"A6" \drop{ \begin{array}{c} \\  w_{m}\end{array}}
\POS"A2" \ar@{-} "A1"
\POS"A3" \ar@{-}   "A2"
\POS"A4" \ar@{->}  ^{c_1}   "A3"
\POS"A4" \ar@{->}  ^{c_2}   "A2"
\POS"A4" \ar@{->}  ^{c_3}   "A1"
\POS"A4" \ar@{->}  ^{c_m}   "A6"
\POS"A4" \ar@{->}  _{c_{m+1}}   "A5"
\POS"A4" \ar@{-}   "A5"
\POS"A5" \ar@{-}   "A6"
\POS"A6" \ar@{.}   "A1"
\POS"A5" \drop{ \begin{array}{c} \\  w_{m+1}\end{array}}
\POS"A4" \drop{\begin{array}{ccc} v &&  \end{array}}
\POS"A1" \drop{ \begin{array}{ccc} && w_{3}\end{array}}
\POS"A2" \drop{ \begin{array}{ccc} && w_{2}\\ && \end{array}}
\POS"A3" \drop{ \begin{array}{ccc} w_{1}&& \\ && \end{array}}
\endxy   \]

Then, given $c\in \{0,\cdots,m\} $ there is an unique colour $c_i$ such that $c_i=c$. In particular, there is an unique arrow $\xymatrix{v \ar[r]^{c_{i}} & v_i}$ of colour $0$.

\end{lema}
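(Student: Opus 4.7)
The plan is a direct application of the previous lemma. Since $\mathcal{C}_{m+2}$ is a clique, for every pair $i\neq j$ the three vertices $v, w_i, w_j$ form a triangle inside $\mathcal{C}_{m+2}$, and this triangle lies in $\mathcal{Q}_n^m$ (it is an induced subquiver of the clique, which is a subquiver of our quiver in the class). Thus Lemma \ref{triangulos admisibles}(1) applies to it and forces $c_i\neq c_j$.

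Concretely, I would: first fix two indices $i\neq j$ in $\{1,\ldots,m+1\}$ and consider the induced triangle on $\{v,w_i,w_j\}$; then invoke Lemma \ref{triangulos admisibles}(1) to conclude $c_i\neq c_j$; then let $i,j$ range over all unordered pairs to conclude that $c_1,\ldots,c_{m+1}$ are pairwise distinct. At this point I have $m+1$ pairwise distinct colours lying in the set $\{0,1,\ldots,m\}$, which has cardinality exactly $m+1$, so by a pigeonhole argument the assignment $i\mapsto c_i$ is a bijection between $\{1,\ldots,m+1\}$ and the colour palette $\{0,1,\ldots,m\}$.

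The last sentence of the lemma is then immediate: since $0\in\{0,1,\ldots,m\}$ has a unique preimage under this bijection, there is a unique index $i$ with $c_i=0$, i.e.\ a unique arrow $v\to w_i$ of colour $0$ among the $m+1$ arrows leaving $v$ inside the clique.

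There is no real obstacle here; the statement is essentially a pigeonhole packaging of Lemma \ref{triangulos admisibles}(1). The only point that deserves a brief check is that the hypothesis of Lemma \ref{triangulos admisibles} is genuinely satisfied, i.e.\ that $\{v,w_i,w_j\}$ really is a triangle in $\mathcal{Q}_n^m$ (not a degenerate or non-admissible configuration); this follows at once from the definition of clique, which guarantees a unique pair of opposite arrows between each pair of these three vertices, making the induced subquiver a $3$-clique to which condition (2) of Definition \ref{la clase} applies.
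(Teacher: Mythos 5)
Your argument is exactly the paper's: pairwise distinctness of the $c_i$ via Lemma \ref{triangulos admisibles}(1) applied to each triangle $\{v,w_i,w_j\}$, followed by a pigeonhole count against the $m+1$ available colours in $\{0,\ldots,m\}$. The proof is correct and matches the paper's reasoning, with your extra check that each triple genuinely forms an admissible triangle being a harmless elaboration.
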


\begin{proof}
According to the previous lemma, all the $m+
1$ colours $c_i$ are distinct. The claim follows since there are exactly $m+1$ different colours in the set $\{0,\cdots,m\} $.
\end{proof}

The above lemma implies directly the following corollary.

\begin{coro}\label{ciclo color cero}
If $w_0, \cdots, w_{m+1}$  are the $m+2$ vertices of a clique $\mathcal{C}_{m+2} $ in $\mathcal{Q}_n^m$. Then, there exits a permutation $\theta$ of the numbers $0, \cdots, m+1$ such that the $(m+2)$-cycle  $w_{\theta(0)} \rightarrow w_{\theta(1)} \rightarrow \cdots \rightarrow w_{\theta(m+1)}\rightarrow w_{\theta(0)}$ has all its arrows coloured with zero.

\end{coro}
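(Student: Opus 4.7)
The plan is to show that the colour-$0$ arrows inside the clique $\mathcal{C}_{m+2}$ form exactly one Hamiltonian cycle; this cycle then provides the desired permutation $\theta$. The first step uses the preceding Lemma 3.3 directly: from every vertex $v$ of $\mathcal{C}_{m+2}$, the $m+1$ outgoing arrows carry the $m+1$ pairwise distinct colours $\{0,1,\dots,m\}$, so $v$ has exactly one outgoing arrow of colour $0$. By skew-symmetry this also forces exactly one incoming arrow of colour $0$ at $v$. Hence the arrows of colour $0$ inside the clique form a spanning subgraph in which every vertex has in-degree and out-degree $1$, i.e., a disjoint union of directed cycles covering all $m+2$ vertices.

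The core step is to prove that any such colour-$0$ cycle must have length exactly $m+2$. Given a cycle $w_0\to w_1\to\cdots\to w_{k-1}\to w_0$ with all arrows of colour $0$, I would prove by induction on $i\in\{1,\dots,k-1\}$ that the colour of the chord $w_0\to w_i$ equals $i-1$. The base $i=1$ is the assumption. For the inductive step, apply Lemma \ref{triangulos admisibles} to the triangle $w_0,w_i,w_{i+1}$: the arrows $w_0\to w_i$, $w_0\to w_{i+1}$, $w_i\to w_{i+1}$ are inside the clique, with known colours $i-1$ and $0$ on the two sides incident to $w_i$. Since $i-1<$ (the unknown colour on $w_0\to w_{i+1}$) is the only consistent case (the reversed inequality would force the colour of $w_i\to w_{i+1}$ to be $m$, not $0$), the formula $c_{12}=c_2-c_1-1$ of Lemma \ref{triangulos admisibles} gives $c(w_0\to w_{i+1})=i$, closing the induction.

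To conclude the length computation, observe that the closing arrow $w_{k-1}\to w_0$ of the cycle has colour $0$, whereas by the induction above $c(w_0\to w_{k-1})=k-2$, so by skew-symmetry $c(w_{k-1}\to w_0)=m-(k-2)$. Equating gives $k=m+2$. Since every colour-$0$ cycle in the clique has length $m+2$ and the cycles partition the $m+2$ vertices, there is exactly one such cycle, and it is Hamiltonian; labelling its vertices in order gives the permutation $\theta$ required by the statement.

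The main obstacle I expect is the careful bookkeeping in the inductive step, in particular ruling out the alternative ordering of colours when invoking Lemma \ref{triangulos admisibles}: one must verify that at each stage the colour $i-1$ on $w_0\to w_i$ forces the next chord colour to be strictly greater, so that the formula $c_2-c_1-1$ (rather than its opposite) applies. Apart from this, everything else is a direct bookkeeping consequence of Lemma 3.3 and skew-symmetry.
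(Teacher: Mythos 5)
Your argument is correct, but it takes a genuinely longer route than the one the paper alludes to. The paper offers no written proof beyond saying the corollary follows directly from Lemma \ref{completo mas grande}, and the intended one-step derivation is: fix a vertex $w_0$ of the clique and order the remaining vertices $u_0,u_1,\dots,u_m$ so that $c(w_0\to u_j)=j$ (possible, and uniquely so, by Lemma \ref{completo mas grande}); then Lemma \ref{triangulos admisibles}(2) gives $c(u_j\to u_{j+1})=(j+1)-j-1=0$ for every $j$, and skew-symmetry gives $c(u_m\to w_0)=m-m=0$, so $w_0\to u_0\to\cdots\to u_m\to w_0$ is already the required colour-zero $(m+2)$-cycle. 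You instead first show that the colour-$0$ arrows inside the clique form a spanning union of directed cycles (out-degree one by Lemma \ref{completo mas grande}, in-degree one because an incoming colour-$0$ arrow corresponds by skew-symmetry to the unique outgoing arrow of colour $m$), and then prove by induction on the chord colours that any colour-$0$ cycle in the clique has length exactly $m+2$. This is sound, with one small imprecision in the inductive step: when the unknown colour $c$ on $w_0\to w_{i+1}$ satisfies $c<i-1$, Lemma \ref{triangulos admisibles} forces $c(w_{i+1}\to w_i)=i-c-2\le m-2$, hence $c(w_i\to w_{i+1})=m-(i-c-2)\ge 2$; this contradicts the colour being $0$, as you need, but it need not equal $m$ as you claim (it equals $m$ only when $c=i-2$). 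What your detour buys is a slightly stronger structural fact — every colour-$0$ cycle sitting inside a clique of $\mathcal{Q}_n^m$ is forced to have length $m+2$, so the colour-$0$ arrows of an $(m+2)$-clique form exactly one Hamiltonian cycle — whereas the ordering argument merely exhibits one such cycle, which is all the corollary requires.
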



\vspace*{.5cm}

We continue with an important property of the class $\mathcal{Q}_n^m$. \\

\begin{lema}\label{cerrado por mutaciones}
The class $\mathcal{Q}_n^m$ is closed under  coloured quiver  mutation.
\end{lema}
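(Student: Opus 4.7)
The plan is to trace the alternative three-step algorithm for the coloured mutation $\mu_j$ applied to a quiver $Q\in\mathcal{Q}_n^m$ and verify directly that the resulting quiver $Q'=\mu_j(Q)$ still satisfies the three conditions of Definition~\ref{la clase}.

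First I would fix notation for the local structure around the mutation vertex $j$. By condition~(1) of Definition~\ref{la clase}, $j$ lies in two cliques $\mathcal{C}_r$ and $\mathcal{C}_k$ meeting only at $j$, with $r+k=z+2$, $r,k\le m+2$, and no arrows between $U:=(\mathcal{C}_r)_0\setminus\{j\}$ and $V:=(\mathcal{C}_k)_0\setminus\{j\}$. Lemma~\ref{triangulos admisibles} implies that the colours of the arrows leaving $j$ inside each clique are pairwise distinct, so at most one of them has colour $0$; Corollary~\ref{ciclo color cero} moreover shows that when a clique has size $m+2$ such a colour-$0$ arrow must exist. Let $u^{\ast}\in U$ and $v^{\ast}\in V$ denote the corresponding colour-$0$ targets, whenever they are defined.

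Next I would carry out the algorithm. Step~1 produces arrows only along paths $i\to j\to k$ with $j\to k$ of colour $0$, so only $u^{\ast}$ and $v^{\ast}$ can appear as targets. Two key computations then follow from Lemma~\ref{triangulos admisibles}: for each $u_l\in U\setminus\{u^{\ast}\}$ the pre-existing arrow $u_l\to u^{\ast}$ inside $\mathcal{C}_r$ has colour $m+1-c_l$, while step~1 adds an arrow of colour $m-c_l$, so step~2 cancels both and $u_l$ is disconnected from $u^{\ast}$; likewise the two paths $u^{\ast}\to j\to v^{\ast}$ and $v^{\ast}\to j\to u^{\ast}$ contribute arrows of the opposite colours $0$ and $m$ between $u^{\ast}$ and $v^{\ast}$, which also cancel. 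On the other hand, the new arrows $u_l\leftrightarrow v^{\ast}$ and $v_l\leftrightarrow u^{\ast}$ survive, since $U$ and $V$ carry no cross arrows in $Q$. Step~3 then shifts by $\pm 1$ the colours of arrows incident to $j$. The net result is that the two cliques at $j$ in $Q'$ are
\[
\mathcal{C}'_r=\{j\}\cup(U\setminus\{u^{\ast}\})\cup\{v^{\ast}\}\qquad\text{and}\qquad\mathcal{C}'_k=\{j\}\cup(V\setminus\{v^{\ast}\})\cup\{u^{\ast}\},
\]
again of sizes $r$ and $k$: mutation effectively swaps $u^{\ast}$ and $v^{\ast}$ between the two cliques. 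The cases where only one of $u^{\ast}$, $v^{\ast}$ exists are entirely analogous but simpler, with one clique growing by one and the other shrinking by one; the bound $\le m+2$ is preserved because, by Corollary~\ref{ciclo color cero}, the clique without a colour-$0$ arrow from $j$ has size strictly less than $m+2$.

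From this explicit description the verification of the three conditions defining $\mathcal{Q}_n^m$ for $Q'$ becomes routine. The quiver $Q'$ is simple, connected and has $n$ vertices; the block-tree structure is preserved because the two new cliques at $j$ meet only at $j$ and the remaining blocks of $Q$ are unaffected, so no induced cycle of length $\ge 4$ is created. Condition~(1) is read off at $j$ and at each vertex of $U\cup V$, and at every other vertex of the quiver there is no change. For condition~(2), the triangles of $Q'$ split into three classes: triangles inside untouched blocks (trivial), triangles inside $\mathcal{C}'_r$ or $\mathcal{C}'_k$ that contain $j$ (the $+1$ and $-1$ shifts from step~3 cancel inside the triangle-sum), and triangles inside the new cliques involving the swapped vertex $v^{\ast}$ or $u^{\ast}$, for which a short computation combining the colours assigned by step~1 with the identity $c_{12}=c_2-c_1-1$ of Lemma~\ref{triangulos admisibles} yields exactly the required sum $m-1$. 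The main obstacle is the bookkeeping across the sub-cases (existence of $u^{\ast}$, of $v^{\ast}$, or of both) and the careful tracking of the step~2 cancellations; the coincidence that makes everything work is that the colours of the pre-existing arrows inside $\mathcal{C}_r$ and $\mathcal{C}_k$ differ by exactly one from those produced by step~1, so step~2 removes precisely the arrows that would otherwise obstruct the new clique structure.
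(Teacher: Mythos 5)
Your local analysis at the mutation vertex is correct and is essentially the paper's proof: the same case split according to whether a colour-$0$ arrow leaves $j$ into neither, one, or both of its two cliques, the same cancellation computation (the pre-existing arrow $u_l\to u^{\ast}$ of colour $m+1-c_l$ against the step-1 arrow of colour $m-c_l$, and the mutual cancellation between $u^{\ast}$ and $v^{\ast}$), the same conclusion that $u^{\ast}$ and $v^{\ast}$ are moved or swapped between the two cliques, and the same use of Lemma \ref{completo mas grande}/Corollary \ref{ciclo color cero} to keep the clique sizes bounded by $m+2$. The verification of condition (2) via $c_{12}=c_2-c_1-1$ also matches (one small caveat you should make explicit: the ``$+1$ and $-1$ cancel'' argument for triangles through $j$ is only safe because the surviving such triangles have both out-colours at $j$ nonzero, so no reduction modulo $m+1$ occurs; the triangles involving the colour-$0$ targets are exactly the ones that are destroyed and rebuilt).

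The one genuine gap is the hole-freeness of $Q'$. You dispose of it by saying that ``the block-tree structure is preserved \ldots so no induced cycle of length $\ge 4$ is created,'' but a block-tree (tree-of-cliques) structure is not part of Definition \ref{la clase}: membership in $\mathcal{Q}_n^m$ only gives simplicity, connectedness, absence of holes, and the two local conditions, and the statement that such a quiver is a tree of cliques is itself a consequence that would have to be proved (hole-freeness plus condition (1) force diamond-freeness, and a connected chordal diamond-free graph is a block graph); asserting that this structure is ``preserved'' is essentially asserting the conclusion you need. The paper instead argues directly: any induced $k$-cycle with $k\ge 4$ in $Q'$ must use one of the new arrows $v_1w_j$ or $w_1v_i$, one checks it can use only one of them, and then inserting the mutation vertex $v$ produces an induced cycle of length $k+1\ge 5$ in $Q$, contradicting that $Q$ is hole-free. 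Either supply that argument, or first prove (and use) the block-graph structure of members of $\mathcal{Q}_n^m$ together with a check that the new arrows lie entirely inside the two new cliques at $j$ and that these cliques meet only at $j$; as written, this step of your plan is not yet a proof.
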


\begin{proof}
Let $Q$ be a quiver in the class $\mathcal{Q}_n^m$ and  let $v$ be a vertex with $z$ neighbours, $z\leq 2m+2$. Then, there exist two cliques $\mathcal{C}_r$  and $\mathcal{C}_k$ such that $v\in \mathcal{C}_r$,  $v\in \mathcal{C}_k$, $r+k=z+2$ and $r,k \leq m+2$. Label $v_1, \cdots, v_{r-1}$ the neighbours of $v$ belonging to the clique $\mathcal{C}_r$  and label $w_1, \cdots, w_{k-1}$ the neighbours of $v$ belonging to the clique $\mathcal{C}_k$.
For each vertex $v_i$ the arrow $\xymatrix{v \ar[r]^{c_{i}} & v_i}$ has colour $0\leq c_i \leq m$ and for each vertex  $w_i$ the arrow $\xymatrix{v \ar[r]^{d_{i}} & w_i}$ has colour $0\leq d_i \leq m$. This colouration of the arrows leaving $v$ determines the colouration of the remaining  arrows. In fact, the condition $c_{ij}=c_j-c_i-1$ determines the colours of all the arrows $\xymatrix{v_i \ar[r]^{c_{ij}} & v_j}$ with $v_i,v_j\in (\mathcal{C}_r)_0$. Analogously the condition $d_{ij}=d_j-d_i-1$ determines the colours of all the arrows $\xymatrix{w_i \ar[r]^{d_{ij}} & w_j}$ with $w_i,w_j\in (\mathcal{C}_k)_0$.\\

By Lemma \ref{triangulos admisibles}, we know that all the colours $c_i$ are distinct. The same holds for the colours $d_i$. Hence, we can assume that $c_1<c_2<\cdots < c_{r-1} $ and $d_1<d_2<\cdots < d_{k-1} $ . Therefore, a colour $c_i$ can possibly coincide with at most another colour $d_j$. \\

Let $Q'$ be the quiver obtained after mutating $Q$ at the vertex $v$. We will show that $Q'$ belongs to the class  $\mathcal{Q}_n^m$. \\

On account of the previous considerations about the colours the proof falls naturally into three cases.

  \begin{enumerate}

  \item \underline{None of the  colours $c_i$ or $d_i$ is zero}:
  In this case, the mutation at the vertex $v$ simply changes the colour of the arrows arriving or leaving $v$  and preserves without changes all the others colours, as follows:

  \[\begin{array}{ cccc}
      \quad  \xymatrix@R=4pt@C=10pt{ &&  v_i \\  v \ar[rru]^{c_i} \ar[rrd]_{d_j } && \\ &&  w_j }  \quad

    & \xymatrix@R=4pt@C=10pt{ & & \\ & \stackrel{\mu_v}{\longmapsto} & \\ &  &}&  \quad  \xymatrix@R=4pt@C=10pt{ &&  v_i \\  v \ar[rru]^{c_{i}-1} \ar[rrd]_{d_{j}-1} && \\ &&  w_j } \quad
\end{array}\]

\quad

  Every triangle $\xymatrix@R=4pt@C=10pt{ && v_j \ar[dd]^{c_{ji}}\\ v \ar[rru]^{c_j} \ar[rrd]_{c_i}\\ && v_i}$  of $Q$ changes to the triangle $\xymatrix@R=4pt@C=10pt{ && v_j \ar[dd]^{c_{ji}}\\ v \ar[rru]^{c_j-1} \ar[rrd]_{c_i-1}\\ && v_i}$.

  \quad

  Assume that $c_{j}+c_{ji}+m-c_{i}=m-1 $, then the new triangle  also satisfies the condition  $c_{j}-1+c_{ji}+m-(c_{i}-1)=m-1 $. The same reasoning applies to the triangles in the clique $\mathcal{C}_k$.

  \item  \underline{There is an arrow of colour zero in only one clique}:
  Without loss of generality we can assume that $c_1=0$. Since $c_1$ belongs to the clique $\mathcal{C}_r$ and we are in the case where only one clique contains an arrow leaving $v$ of colour $0$, Lemma \ref{completo mas grande} implies that the  clique $\mathcal{C}_k$ satisfies that $k < m+1$.  Then, the mutation at the vertex $v$ acts as follows:

  \[\begin{array}{ cccc}
      \quad  \xymatrix@R=10pt@C=10pt{  & \scriptstyle{i\neq 1} & \\& v_i & \\ &&  v_1 \ar[lu]_{c_{1i}}\\  v  \ar[ruu]^{c_i}\ar[rru]^{0} \ar[rrd]_{d_j } && \\ &&  w_j }  \quad

    & \xymatrix@R=4pt@C=10pt{  && \\ && \\& & \\ && \\ && \\ & \stackrel{\mu_v}{\longmapsto} & \\ &  &}&  \quad
    \xymatrix@R=10pt@C=10pt{  & \scriptstyle{i\neq 1} & \\& v_i & \\ &&   \\  v  \ar[ruu]^{c_i-1} \ar[rrd]^{m} \ar[rdd]_{d_{j}-1} &&  \\ &&v_1\ar[dl]^{d_j}\\ &  w_j& }   \quad
\end{array}\]

For each arrow $\xymatrix{v \ar[r]^{c_{i} }& v_i}$ in $Q$ ($i\neq 1$), the mutation adds a new  arrow  $\xymatrix{v_1 \ar[r]^{c_{i}} & v_i}$  which cancels the arrow $\xymatrix{v_1\ar[r]^{c_{1i}} & v_i}$ since $c_{1i}=c_i-1\neq c_i$.

Then,  each triangle $$\xymatrix@R=4pt@C=10pt{ && v_1 \ar[dd]^{c_{1i}}\\ v \ar[rru]^{0} \ar[rrd]_{c_i}\\ && v_i}$$  of $Q$  disappears in $Q'$, and consequently the clique $\mathcal{C}_r$ of $Q$ becomes a clique $\mathcal{C'}_{r-1}$ with $r-1 $ vertices on $Q'$.

In addition for each arrow $\xymatrix{v \ar[r]^{d_{j} }& w_j}$ in $Q$, the mutation creates a new  triangle  $$\xymatrix@R=4pt@C=10pt{ && v_1 \ar[dd]^{d_j}\\ v \ar[rru]^{m} \ar[rrd]_{d_j-1}\\ && w_j}$$ in $Q'$. Therefore the clique $\mathcal{C}_k$ of $Q$ becomes a clique $\mathcal{C'}_{k+1}$ with $k+1 $ vertices in $Q'$ and $k+1 < m+2$. We have proved more, namely that despite the fact that the number of  neighbours of the vertices $w_j$ is increased by one after mutating at $v$, it is still bounded by $2m+2$.  Let $z$ be the number of neighbours that  the vertex $v_1$ has in $Q$. Then, the number of  neighbours of the vertex $v_1$ in $Q'$ changes to $z-r+1+k$. Since $z-r < m$  (by condition 1 of  Definition \ref{la clase}) we have that $z-r+1+k\leq 2m+2$.  Finally observe that, after mutating at $v$,  the number of  neighbours of the vertex $v$  remains unchanged and the number of  neighbours of the vertices $v_i$ ($i\neq 1$) is reduced by one.

\item  \underline{There exits an arrow of colour zero in both cliques}: As in the above case, we can assume that $c_1=d_1=0$.  Then, the mutation at the vertex $v$ acts as follows:

  \[\begin{array}{ cccc}
      \quad  \xymatrix@R=10pt@C=10pt{  & \scriptstyle{i\neq 1} & \\& v_i & \\ &&  v_1 \ar[lu]_{c_{1i}}\\  v  \ar[rru]^{0} \ar[ruu]^{c_{i}} \ar[rrd]^{0} \ar[rdd]_{d_{j}} &&  \\ &&w_1\ar[dl]^{d_{1j}}\\ &  w_j& \\  & \scriptstyle{j\neq 1} &}  \quad

    & \xymatrix@R=4pt@C=10pt{ & & \\ && \\ && \\ & & \\ && \\ && \\ & \stackrel{\mu_v}{\longmapsto} & \\ &  &}&  \quad
    \xymatrix@R=10pt@C=10pt{  & \scriptstyle{i\neq 1} & \\& v_i & \\ &&  w_1\ar[lu]_{c_i} \\  v  \ar[rru]^{m} \ar[ruu]^{c_i-1} \ar[rrd]^{m} \ar[rdd]_{d_j-1} &&  \\ &&v_1\ar[ld]^{d_j}\\ &  w_j& \\  & \scriptstyle{j\neq 1} & }   \quad
\end{array}\]

As in the previous case, for $i\neq 1$, the mutation at $v$ deletes the arrow  $\xymatrix{  v_1 \ar[r]^{c_{1i}} &  v}$ of the triangles
$$\xymatrix@R=4pt@C=10pt{ && v_1 \ar[dd]^{c_{1i}}\\ v \ar[rru]^{0} \ar[rrd]_{c_i}\\ && v_i}$$  of $Q$ and  for  $j\neq 1$, adds a new arrow  $\xymatrix{  v_1 \ar[r]^{d_j} &  w_j}$ creating a new triangle $$\xymatrix@R=4pt@C=10pt{ && v_1 \ar[dd]^{d_j}\\ v \ar[rru]^{m} \ar[rrd]_{d_{j}-1}\\ && w_j}$$ in $Q'$.  Likewise,  for $j\neq 1$, the mutation at $v$ deletes the arrow  $\xymatrix{  w_1 \ar[r]^{d_{1j}} &  w_j}$ of the triangles  $$\xymatrix@R=4pt@C=10pt{ && w_1 \ar[dd]^{d_{1j}}\\ v \ar[rru]^{0} \ar[rrd]_{d_j}\\ && w_j}$$  of $Q$ and for $i\neq 1$, adds a new arrow  $\xymatrix{  w_1 \ar[r]^{c_i} &  v_i}$ creating a new triangle $$\xymatrix@R=4pt@C=10pt{ && w_1 \ar[dd]^{c_i}\\ v \ar[rru]^{m} \ar[rrd]_{c_i-1}\\ && v_i}$$ in $Q'$. Therefore, the vertices $v_1$  and $ w_1$ swap their places at the cliques  $\mathcal{C}_r$  and $\mathcal{C}_k$ respectively, but clearly the size of each clique do not change.

The proof is completed by showing that the number of neighbours of  the vertices $v_1$ and $w_1$ in $Q'$ remains bounded by $2m+2$.
Let $z$ be the number of neighbours that  the vertex $v_1$ has in $Q$. Then, the number of  neighbours of the vertex $v_1$ in $Q'$ changes to $z'=z-r+1+k-1$. Since $z-r \leq m$   we have that $z'\leq m+(m+2)=2m+2$. The same reasoning applies to the number of  neighbours of the vertex $w_1$ in $Q'$.

\end{enumerate}

From the analysis of the three cases above it also follows that the
quiver $Q'$ is simple and connected. To complete the proof, all that
remains is to prove that $Q'$ is hole-free. By contradiction, assume
that there is an induced $k$-cycle $c=(x_1x_2\cdots x_k)$ in $Q'$ with
$k\geq 4$. Since $Q$ is hole-free, $c$ must contain an arrow $e\in
Q'_1\setminus Q_1$. The only possibilities are $e=v_1w_j$ for some $j$
(if $c_1=0$) and $e=w_1v_i$ for some $i$ (if $d_1=0$). We can assume
without loss of generality that $c_1=0$ and $x_1x_2=v_1w_j$ with $1\leq j < k$ (in addition, $j\neq 1$ if $d_1=0$). Since $x_k\neq w_j$ and $x_k w_j \not\in Q'_1$ then $x_k
\not\in \{w_1,w_2,\ldots, w_{k-1}\}$. For $3\leq i <k$ we have $v_1x_i
\not\in Q'_1$ and then $x_i \not\in \{w_1,w_2,\ldots, w_{k-1}\}$.
Therefore, $v_1w_j$ is the only arrow in $c$ which is not in $Q$. Since
$v_1v$ and $w_jv$ are arrows of $Q'$, we have $v\not\in
\{x_3,\ldots,x_k\}$ and $c'=(x_1vx_2x_3\cdots x_k)$ would be an
induced cycle of length $k+1\geq 4$ in $Q$ which is a contradiction
because $Q$ is free-hole.

\end{proof}

If $m=1$ it is known that the mutation is invertible, because it is an involution (i.e., $\mu^2=Id$).
It is easy to see that for the coloured mutation with $m\geq 2$ is not longer true that $\mu^{m+1}=Id$. See Example \ref{mu3 no id} below. However, we have the following.

\begin{lema}\label{mutacion invertible}
The coloured quiver  mutation restricted to the  class $\mathcal{Q}_n^m$ is invertible. Moreover
if $Q\in \mathcal{Q}_n^m$ and $v$ is any vertex of $Q$ we have $\mu_v^{m+1}(Q)=Q$.
\end{lema}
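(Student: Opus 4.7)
The plan is to extract from the proof of Lemma \ref{cerrado por mutaciones} a single combinatorial description of how $\mu_v$ acts on the star of $v$, and then observe that the resulting dynamics have period dividing $m+1$.

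\textbf{Step 1.} Reading the three cases of the proof of Lemma \ref{cerrado por mutaciones} in a uniform way, one extracts the following rule governing the action of $\mu_v$ on $Q \in \mathcal{Q}_n^m$. Let $\mathcal{C}_r, \mathcal{C}_k$ be the two cliques through $v$, and, for each neighbour $w$ of $v$, let $c(w) \in \{0,\ldots,m\}$ be the colour of the arrow $v \to w$. After applying $\mu_v$:
\begin{enumerate}
\item the colour of $v \to w$ becomes $c(w) - 1 \pmod{m+1}$ for every neighbour $w$;
\item $w$ moves to the other clique if and only if $c(w) = 0$.
\end{enumerate}
(Cases 1, 2, 3 of that proof correspond to $0$, $1$, $2$ such moves respectively, the observation $0 - 1 \equiv m$ giving the new colour $m$ assigned to the moved vertex.) Moreover, by Lemma \ref{triangulos admisibles}, the colours of arrows joining two neighbours of $v$ lying in a common clique are determined by the values $c(w)$; and arrows of $Q$ whose endpoints both lie outside $\{v\}\cup\{\text{neighbours of }v\}$ are untouched by $\mu_v$.

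\textbf{Step 2.} Iterate the rule from Step 1 a total of $m+1$ times. For each neighbour $w$, the value $c(w)$ decreases by $1 \pmod{m+1}$ at every step, so after $m+1$ steps it is restored to its initial value. Furthermore the sequence $c(w), c(w)-1, \ldots, c(w)-m \pmod{m+1}$ takes the value $0$ exactly once, so $w$ switches cliques exactly once during the process. Hence after $m+1$ applications of $\mu_v$ every neighbour of $v$ originally in $\mathcal{C}_r$ lies in $\mathcal{C}_k$ and vice versa: the two cliques through $v$ have simply swapped their non-$v$ vertex sets.

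\textbf{Step 3.} This swap is invisible at the level of the quiver because Definition \ref{la clase} does not distinguish between the two cliques through $v$. Concretely, the star of $v$ in $\mu_v^{m+1}(Q)$ consists of two cliques through $v$ with no arrows between them, whose non-$v$ vertex sets are respectively those of $\mathcal{C}_k$ and $\mathcal{C}_r$, and in which every arrow $v \to w$ carries the same colour as in $Q$; Lemma \ref{triangulos admisibles} then forces the internal colourings of these two cliques to agree with those of the originals. Since arrows outside the star of $v$ are unchanged by $\mu_v$, we conclude $\mu_v^{m+1}(Q) = Q$; in particular $\mu_v^{m}$ is a two-sided inverse of $\mu_v$ on $\mathcal{Q}_n^m$.

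The main subtle point is Step 3, where one must check that the global exchange of the two cliques really is a symmetry of the coloured quiver and not merely an isomorphism. This is guaranteed by the symmetry of Definition \ref{la clase} in the two cliques through $v$, together with the fact that the internal colouring of each clique is recovered locally from the colours of the arrows emanating from $v$ via Lemma \ref{triangulos admisibles}.
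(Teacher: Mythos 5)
Your proof is correct, and it takes a genuinely different route from the paper's. The paper orders the neighbours $u_1,\ldots,u_z$ of $v$ so that $c_1<\cdots<c_z$ (after a somewhat hand-waved reduction to pairwise distinct colours), writes $m+1=(c_1+1)+(c_2-c_1)+\cdots+(c_z-c_{z-1})+(m-c_z)$, and tracks explicitly, block by block, which arrows between neighbours are created, cancelled and later recovered, checking at the end that all colours return to their original values. You instead isolate from the case analysis in the proof of Lemma \ref{cerrado por mutaciones} a one-step transition rule on the local state at $v$ --- the unordered bipartition of the neighbours of $v$ into the two cliques together with the colour vector of the arrows leaving $v$ --- note via Lemma \ref{triangulos admisibles} and condition (1) of Definition \ref{la clase} that this state determines the induced subquiver on $v$ and its neighbours while all other arrows are untouched by $\mu_v$, and then conclude by periodicity: colours decrement modulo $m+1$, each neighbour crosses to the other clique exactly once (at the step where its colour is $0$), so after $m+1$ steps the colour vector is restored and the bipartition is restored as an unordered pair, the swap of the two cliques being invisible in the quiver itself. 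This organization buys a cleaner argument than the paper's bookkeeping and, in particular, dispenses with the paper's ``assume all $c_i$ distinct'' reduction, since two neighbours lying in different cliques may legitimately share a colour and your rule treats the two cliques independently. The only mild caveats are that your Step 1 rests on citing the computations inside the proof of Lemma \ref{cerrado por mutaciones} (which is legitimate here, as the new colours at $v$ and the new clique memberships are exactly what that proof records), and that the degenerate situation where one of the two cliques through $v$ is just $\{v\}$ should be read as included in your rule, which it is.
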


\begin{proof}
Let $Q$ be a quiver belonging to the class $\mathcal{Q}_n^m$ and  let $v$ be any vertex of $Q$. We will show that $\mu_v^{m+1}(Q)=Q$.

Let $u_1, \cdots, u_{z}$ be  the neighbours of $v$. We can assume that indexes are arranged in such a way that  the colours $c_i$ of the arrows  $\xymatrix{v \ar[r]^{c_{i}} & u_i}$ satisfies $0 \leq c_1<c_2<\cdots < c_z \leq m$. At most there are two arrows $\xymatrix{v \ar[r]^{c_{i}} & u_i^1}$ and $\xymatrix{v \ar[r]^{c_{i}} & u_i^2}$ with the same colour $c_i$, then we can assume without loss of generality that all the colours $c_i$ are different. We know that, if $u_i$ and $u_j$ belong to the same complete subquiver, then we have an arrow $\xymatrix{u_i \ar[r]^{c_{ij}} & u_j}$ of colour $c_{ij}=c_{j}-c_i-1$ in $Q$. If $j<i$ we write $c_{j}-c_i=m-c_i+c_j+1 \in \{0,\cdots,m\}$.


We write $m+1=(c_1+1)+(c_2-c_1)+\cdots + (c_z-c_{z-1})+(m-c_z)$ which decomposes the mutation  $\mu_v^{m+1}$ into $z+1$ steps in such a way that each of them changes the shape of the resulting quiver. We will show that at the end of the $z+1$ steps we obtain the original quiver $Q$.

In the first step we apply $\mu_v^{c_1+1}$ which add an arrow $\xymatrix{u_1 \ar[r]^{c_{i}-c_1} & u_i}$ for all $i\neq 1$. Then, if the quiver $Q$ has already an arrow $\xymatrix{u_1 \ar[r]^{c_{i}-c_1-1} & u_i}$ it is deleted in the resulted quiver $\mu_v^{c_1+1}(Q)$. In addition, the arrows $\xymatrix{v \ar[r]^{c_{i}} & u_i}$ of $Q$ change to the colour $\xymatrix{v \ar[r]^{c_{i}-c_1-1} & u_i}$ in $\mu_v^{c_1+1}(Q)$.

 \[\begin{array}{ cccc}
      \quad  \xymatrix@R=10pt@C=10pt{   & \\& u_i & \\ &&  u_1 \ar[lu]_{c_{i}-c_1-1}\\  v  \ar[ruu]^{c_i}\ar[rru]^{c_1} \ar[rrd]_{c_j } && \\ &&  u_j }  \quad

    & \xymatrix@R=4pt@C=10pt{  && \\ && \\& & \\ && \\ && \\ & \stackrel{\mu_v^{c_1+1}}{\longmapsto} & \\ &  &}&  \quad
    \xymatrix@R=10pt@C=10pt{   & \\& u_i & \\ &&  u_1 \ar[dd]^{c_j-c_1} \\  v  \ar[ruu]^{c_{i}-c_1-1}\ar[rru]^{m} \ar[rrd]_{c_j-c_1-1 } && \\ &&  u_j }  \quad \\
    && \\
    &&\\

    Q && \mu_v^{c_1+1}(Q)

\end{array}\]

%

When we arrive to step $i$ we apply  $\mu_v^{c_i-c_{i-1}}$ to the quiver  $\mu_v^{c_1+1+(c_2-c_1)+\cdots +(c_{i-1}-c_{i-2})}(Q)$. Therefore, if we had created an arrow $\xymatrix{u_1 \ar[r]^{c_{i}-c_1} & u_i}$ at step 1, this arrow is going to be canceled with the arrow  $\xymatrix{u_i \ar[rr]^{m-c_i+ c_{1}+1} && u_1}$ added at this step $i$. Otherwise, if we had deleted the arrow $\xymatrix{u_1 \ar[rr]^{c_{i}-c_1-1} && u_i}$ of $Q$ at step 1, now at step $i$ we are going to recover it (with the  original colour) in the quiver $\mu_v^{c_1+1+(c_2-c_1)+\cdots +(c_{i}-c_{i-1})}(Q)$.

The colour $c_j$ of the arrows $\xymatrix{v \ar[r]^{c_{j}} & u_j}$ of $Q$   change to the colour $c_{j}-c_i-1$ in the quiver $\mu_v^{c_1+1+(c_2-c_1)+\cdots +(c_{i}-c_{i-1})}(Q)$.


\[\begin{array}{ cccc}
      \quad  \xymatrix@R=10pt@C=20pt{   & \\& u_i & \\ &&  u_1 \ar[dd]^{c_j-c_1} \\  v  \ar[ruu]^{c_{i}-c_{i-1}-1}  \ar[rru]_{m-c_{i-1}+c_1} \ar[rrd]_{c_j-c_{i-1}-1 } && \\ &&  u_j }  \quad

    & \xymatrix@R=4pt@C=10pt{  && \\ && \\& & \\ && \\ && \\ & \stackrel{\mu_v^{c_i-c_{i-1}}}{\longmapsto} & \\ &  &}&  \quad
    \xymatrix@R=10pt@C=20pt{   & \\& u_i & \\ &&  u_1 \ar[lu]_{c_{i}-c_1-1} \\  v  \ar[ruu]^{m} \ar[rru]_{m-c_{i}+c_1} \ar[rrd]_{c_j-c_i-1 } && \\ &&  u_j }  \quad \\
    && \\
    &&\\

    \mu_v^{c_1+1+(c_2-c_1)+\cdots +(c_{i-1}-c_{i-2})}(Q) && \mu_v^{c_1+1+(c_2-c_1)+\cdots +(c_{i}-c_{i-1})}(Q)

\end{array}\]

\vspace*{.5cm}

The same reasoning, explained with $u_1$ and $u_i$, applies to any pair of vertices $u_i$ and  $u_j$ with $i\neq j$. Assume $i<j$, then the new arrows created at step $i$ in the quiver $\mu_v^{c_1+1+(c_2-c_1)+\cdots +(c_{i}-c_{i-1})}(Q)$ are going to be deleted at step $j$ in the quiver $\mu_v^{c_1+1+\cdots + (c_{i}-c_{i-1})+ \cdots +(c_{j}-c_{j-1})}(Q)$. Similarly,  the arrows  which are canceled at step $i$  in the quiver $\mu_v^{c_1+1+(c_2-c_1)+\cdots +(c_{i}-c_{i-1})}(Q)$ are going to be recovered at step $j$ in the quiver $\mu_v^{c_1+1+\cdots + (c_{i}-c_{i-1})+ \cdots +(c_{j}-c_{j-1})}(Q)$.

Finally, at the last step we apply $\mu_v^{m-c_z}$ to the quiver  $\mu_v^{c_1+1+(c_2-c_1)+\cdots +(c_{z}-c_{z-1})}(Q)$. This step just changes the colour of the arrows leaving the vertex $v$.  The colour $c_k$ of the arrow $\xymatrix{v \ar[r]^{c_{k}} & u_k}$ of $Q$   changes to the colour $c_k-c_z-1$ after the first $z $ steps. Therefore, after applying $\mu_v^{m-c_z}$ it returns to the colour $c_k$ in $\mu_v^{c_1+1+(c_2-c_1)+\cdots +(c_z-c_{z-1})-(m-c_z)}(Q)=\mu_v^{m+1}(Q)$.

\end{proof}

We finish this section with two examples that show the importance of the previous lemma. Next example  shows that the converse of the above lemma is not true. In fact, the class of coloured quivers where the coloured mutation is invertible is wide bigger than the class $\mathcal{Q}_n^m$.

\begin{ejem}
 The following $2$-coloured quiver $Q$ does not belong to $\mathcal{Q}_3^2$ but $\mu_1^{3}(Q)=Q$.

%
%


\[\begin{array}{ccccc}
& \quad \xymatrix@R=5pt@C=20pt{ & & 2 \\  4 &  1 \ar_{2}[l] \ar^{2}[ru] \ar_{2}[rd] &  \\ && 3 }\quad &\\
 \xymatrix@R=15pt@C=15pt{ & \\  \ar@{|->}@/^2mm/@<1.6ex>^{\mu_1}[ur] &  }  &  & \xymatrix@R=15pt@C=15pt{ \ar@{|->}@/^2mm/@<1.6ex>^{\mu_1}[dr] & \\  & }  \\
\quad  \xymatrix@R=5pt@C=20pt{  & & 2 \\  4 &  1 \ar_{0}[l] \ar^{0}[ru] \ar_{0}[rd] &  \\ && 3 }  \quad && \quad  \xymatrix@R=5pt@C=20pt{ & & 2 \\  4 &  1 \ar_{1}[l] \ar^{1}[ru] \ar_{1}[rd] &  \\ && 3 }\quad \\
&\xymatrix{   && \ar@{|->}@/^2mm/@<1.6ex>^{\mu_1}[ll] \\ && } &
\end{array}\]

\end{ejem}


The following example illustrate one of the most noticeable differences between the $m$-coloured mutation (for $m\geq 2$)  and the classical mutation (i.e., a $m$-coloured mutation with $m=1$) which is always an involution (i.e., $\mu^{2}= Id$).

\begin{ejem}\label{mu3 no id}
This example shows  a $2$-coloured quiver $Q$ such that $\mu_2^{i}(Q)$ belongs to $\mathcal{Q}_3^2$  for all $1\leq i$ but $Q$ does not. 

%

\[\begin{array}{ccccc}
      \quad \xymatrix@R=15pt@C=15pt{ 1 & & 3 \ar_{0}[ll] \\  &  2  \ar^{0}[lu] \ar_{1}[ru] &  } \quad
& \stackrel{\mu_2}{\longmapsto} & \quad \xymatrix@R=15pt@C=15pt{ 1 \ar_{0}[rd]& & 3  \\  &  2  \ar_{0}[ru] &  }\quad  &  \stackrel{\mu_2}{\longmapsto} & \quad  \xymatrix@R=15pt@C=15pt{ 1 \ar_{1}[rd]  \ar^{0}[rr]& & 3  \ar^{0}[ld] \\  &  2  &  }\quad  \\  &&& \xymatrix@R=12pt@C=10pt{  & & \\ & & \ar@{|->}^{\mu_2}[ul]}  & \xymatrix@R=12pt@C=10pt{  & \ar@{|->}^{\mu_2}[d] & \\ & & }  \\ &&&& \quad  \xymatrix@R=15pt@C=15pt{ 1 \ar^{0}[rr] & & 3 \ar^{1}[ld] \\  &  2  \ar^{0}[lu]}\quad
\end{array}\]

\vspace*{.3cm}

In particular, $\mu_2^{3}(Q)\neq Q$.

\end{ejem}

\vspace*{.3cm}

\section{Proof that $\mathcal{Q}_n^m$ is the coloured mutation class of $\mathbb{A}_n$}

We start this section defining two special types of cliques. Let $Q$ be a coloured quiver belonging to the class $\mathcal{Q}_n^m$ and $\mathcal{C}$  be a clique in $Q$. Let $\mathcal{A(C)}$ be the set of all arrows belonging to $\mathcal{C}$.

\begin{defi} The clique $\mathcal{C}$ is said to be:
\begin{enumerate}
 \item[$(a)$] \emph{almost extremal} if there is a connected component of  $Q\setminus \mathcal{A(C)}$ which
 is an  $\mathbb{A}_n$-quiver for $n\geq 1$.
 \item[$(b)$] \emph{extremal} if there is  a connected component of  $Q\setminus \mathcal{A(C)}$ which
 is an  $\mathbb{A}_1$-quiver (i.e., it consists of a single vertex).
\end{enumerate}

\end{defi}

Clearly, every extremal clique is an almost extremal clique.\\


\begin{ejem}

In the following quiver the clique determined by the vertices $2,3$ and $4$ is almost extremal but not extremal and the clique determined by the vertices $5,6,7$ and $8$ is extremal.
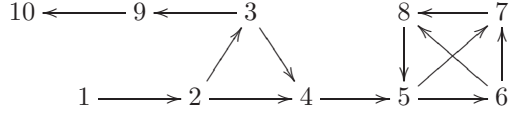
\begin{figure}[H]

\[
\xymatrix@R=20pt@C=10pt{ 10 && \ar[ll] 9&& \ar[rd] \ar[ll] 3 &&  & 8 \ar[d] && 7 \ar[ll]    \\  & 1 \ar[rr] && 2 \ar[ur] \ar[rr] && 4 \ar[rr] && 5 \ar[urr] \ar[rr] &&  6 \ar[u] \ar[ull] }
\]

\caption{Example of  a quiver with an extremal and an almost extremal clique.}
\end{figure}

\end{ejem}

We continue showing that any  quiver in $\mathcal{Q}_n^m$ has at least one almost extremal clique $\mathcal{C}$ and that every almost extremal clique $\mathcal{C}$ is mutation equivalent to an extremal clique.\\

\begin{lema}\label{existencia de ciclos casi extremales}
Let $Q$ be a coloured quiver belonging to the class $\mathcal{Q}_n^m$  which is not an $\mathbb{A}_n$-quiver. Then $Q$ has at least one almost extremal clique.

\end{lema}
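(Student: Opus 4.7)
The plan is to exploit the tree-like clique structure of $Q\in\mathcal{Q}_n^m$: I would identify a ``peripheral'' maximal clique of size at least three and show that removing its arrows leaves a chain as one of the connected components. First, note that $Q$ must contain a clique of size at least three, for otherwise every maximal clique of $Q$ has size at most two, and condition~(1) of Definition~\ref{la clase} forces each vertex of $Q$ to have degree at most two; combined with hole-freeness and connectedness this would make $Q$ a simple path, hence an $\mathbb{A}_n$-quiver, contradicting the hypothesis.

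Next, I would extract from condition~(1) two structural facts: (i) every vertex of $Q$ lies in at most two maximal cliques of size $\ge 2$, because its neighborhood decomposes into exactly two disjoint cliques meeting only at $v$; and (ii) two distinct maximal cliques share at most one vertex, since any shared pair $\{v,w\}$ would force $w$ to appear in both of the neighborhood-cliques of $v$, contradicting the disjointness in condition~(1). Using (i) and (ii), I would define the \emph{clique graph} $T$, whose nodes are the maximal cliques of $Q$ of size $\ge 2$ and whose edges join two cliques sharing a common vertex, and prove that $T$ is a tree: a cycle in $T$ of length $k\ge 4$ would lift to an induced $k$-cycle in $Q$, that is, a hole; and a cycle of length three would produce a triangle in $Q$ whose three vertices would then lie in a fourth maximal clique, contradicting (i).

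Given the tree $T$, the almost extremal clique is exhibited as follows. Pick any leaf $\mathcal{C}_0$ of $T$ and walk along the unique path in $T$ starting at $\mathcal{C}_0$ until reaching the first node $\mathcal{C}^*$ with $|\mathcal{C}^*|\ge 3$ (which exists by the previous step). Let $v^*$ be the vertex of $\mathcal{C}^*$ shared with its predecessor on the walk, and let $B$ be the connected component of $v^*$ in $Q\setminus\mathcal{A}(\mathcal{C}^*)$. Using the tree structure of $T$ one checks that $V(B)\cap\mathcal{C}^*=\{v^*\}$, so $B$ is an induced subquiver of $Q$; by the choice of $\mathcal{C}^*$, every maximal clique of $B$ has size two, and hence by (i) every vertex of $B$ has degree at most two. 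Since $B$ is connected, contains no triangles (no $3$-cliques) and no holes (inheriting from $Q$), it is a simple path, that is, an $\mathbb{A}_\ell$-quiver for some $\ell\ge 1$. Therefore $\mathcal{C}^*$ is almost extremal.

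The main technical obstacle is establishing rigorously that the clique graph $T$ is a tree: one has to combine condition~(1) with hole-freeness and the two-cliques-per-vertex bound to rule out every cycle in $T$ (the trickiest case being a $3$-cycle in $T$, where a triangle in $Q$ must be contained in some maximal clique, forcing one of its vertices to lie in three maximal cliques). Once this tree structure is in place, the ``walk until the first large clique'' construction completes the proof.
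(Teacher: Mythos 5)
Your proposal is correct in substance, but it takes a genuinely different route from the paper. The paper's proof is a short, local argument via a longest simple path $\mathcal{P}=v_1\rightarrow v_2\rightarrow\cdots\rightarrow v_m$: if $v_1$ lies in a clique of size at least three, that clique is already extremal (a neighbour of $v_1$ outside it would extend $\mathcal{P}$, hole-freeness guaranteeing the new vertex is not on $\mathcal{P}$); otherwise the clique through the first branching vertex $v_k$ and $v_{k+1}$ is almost extremal, the initial segment $v_1\rightarrow\cdots\rightarrow v_k$ providing the $\mathbb{A}_k$ component. You instead build the global picture: from condition (1) of Definition \ref{la clase} you get that each vertex lies in at most two maximal cliques and that distinct maximal cliques meet in at most one vertex, and with hole-freeness the clique graph $T$ is a tree; an almost extremal clique is then located by walking from a leaf to the first clique of size at least three. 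This is heavier machinery, but it buys the tree-of-cliques description of quivers in $\mathcal{Q}_n^m$ (the structure underlying the whole classification, as in the $m=1$ case of Buan--Vatne), whereas the paper's argument produces the clique with minimal overhead. Your sketch of acyclicity does go through: degenerate coincidences of the shared vertices violate the two-cliques-per-vertex bound, a chord of the lifted cycle is impossible because the maximal cliques containing a chord endpoint are exactly the two consecutive cliques of the $T$-cycle at that vertex, so the lift is induced and hence a hole, and the $3$-cycle case works as you indicate. Two small points should be tightened: ``the unique path in $T$ starting at $\mathcal{C}_0$'' is justified only because every intermediate $2$-clique has at most two neighbours in $T$ (each of its two vertices lies in at most one further maximal clique), so the walk away from the leaf is forced until the first big clique is met; and if the leaf itself has size at least three there is no predecessor and no $v^*$, but then the leaf is directly seen to be extremal, since all but at most one of its vertices have all their neighbours inside it.
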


\begin{proof}
Consider a path $\mathcal{P}=v_1\rightarrow v_2 \rightarrow \cdots \rightarrow v_m$ in $Q$ that does not repeat vertices and is the longest path in $Q$.
Since $Q$ is not an $\mathbb{A}_n$-quiver, $\mathcal{P}$ must pass through at least two vertices belonging to a clique $\mathcal{C}$  of size $r\geq 3$. Assume that $v_1 \in \mathcal{C}$. We claim that the connected component of  $Q\setminus \mathcal{A(C)}$ containing the vertex $v_1$ is an  $\mathbb{A}_1$-quiver and equivalently $\mathcal{C}$ is an extremal clique. Indeed, if it is not the case, there is a vertex $w \notin \mathcal{C} $ which is a neighbour of $v_1$. Since $Q$ has not holes we have that $w\notin \mathcal{P}$.  Therefore the path $w \rightarrow v_1\rightarrow v_2 \rightarrow \cdots \rightarrow v_m$ is a path in $Q$ longer than $\mathcal{P}$.

We now turn to the case $v_1 \notin \mathcal{C}$. Let $v_k$ be the first vertex of the path $\mathcal{P}$ with at least three neighbours  namely $v_{k-1}, v_{k+1}$ and $v_{k+2}$. Consider the clique $\mathcal{C}'$ to which the vertices $v_k $ and $v_{k+1}$ belong. Clearly $\mathcal{C}'$ is almost extremal since $Q\setminus \mathcal{A(C')}$ contains the component $v_1\rightarrow v_2 \rightarrow \cdots \rightarrow v_k$ of type $\mathbb{A}_k$.

\end{proof}

Given a quiver $Q$ with an almost extremal clique $\mathcal{C}$ next lemma allow us to choose the colouration for the connected component of  $Q\setminus \mathcal{A(C)}$ of type $\mathbb{A}_k$. This fact will be useful in the following.
\\


\begin{lema}
All colourations of $\mathbb{A}_n$ are mutation equivalent.
\end{lema}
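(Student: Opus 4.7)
The plan is to fix a convenient canonical colouration and show that every $\mathbb{A}_n$-colouration can be mutated to it. I take as canonical the colouration $Q^{\ast}$ in which each arrow $i\to i+1$ has colour $0$ (so its reverse $i+1\to i$ has colour $m$) for $i=1,\ldots,n-1$. Encoding an $\mathbb{A}_n$-colouration by the tuple $(c_1,\ldots,c_{n-1})\in\{0,1,\ldots,m\}^{n-1}$, where $c_i$ is the colour of the arrow $i\to i+1$, my goal is to show that any such tuple can be transformed into $(0,\ldots,0)$ by a sequence of coloured mutations.

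The key technical step is the following sub-procedure for \emph{decrementing $c_k$ by one}: assuming $c_1=\cdots=c_{k-1}=0$ and $c_k\neq 0$, I apply $\mu_k$, then $\mu_{k-1}$, \ldots, then $\mu_1$ in this order. Using the alternative description of coloured mutation the effect of each step can be checked directly: $\mu_k$ is performed with $c_{k-1}=0$ and $c_k\neq 0$, so the two out-arrows at vertex $k$ have colours $m$ and $c_k$, both nonzero, hence step (1) of the algorithm creates no new arrows and step (3) simply sends $c_{k-1}\mapsto 1$ and $c_k\mapsto c_k-1$. For $j=k-1,k-2,\ldots,2$, each $\mu_j$ is performed with $c_{j-1}=0$ and $c_j=1$, so again both out-arrow colours at $j$ (namely $m$ and $1$) are nonzero, step (1) is vacuous and step (3) moves the value $1$ from position $j$ to position $j-1$. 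Finally $\mu_1$ is a leaf mutation which simply decrements $c_1$ from $1$ to $0$. The net effect is that $c_k$ drops by one and no other coordinate changes.

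The main procedure then runs the sub-procedure in a double loop: for $k=1,2,\ldots,n-1$ in order, invoke the sub-procedure repeatedly until $c_k$ becomes $0$. Since the sub-procedure only touches coordinates of index at most $k$, the coordinates $c_{k+1},\ldots,c_{n-1}$ are preserved, and the zeros $c_1=\cdots=c_{k-1}=0$ established in earlier outer iterations are re-established after each sub-procedure call. When the outer loop terminates, the quiver has been brought to $Q^{\ast}$, which proves mutation equivalence of every $\mathbb{A}_n$-colouration with $Q^{\ast}$ and hence with each other by transitivity.

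The main obstacle is to confirm that none of the mutations in the sub-procedure creates a triangle, which would push the quiver outside the $\mathbb{A}_n$-shape. Since new arrows appear only via step (1) of the alternative algorithm, and step (1) requires an out-arrow of colour $0$ at the mutated vertex, the verification reduces to checking in each case that both out-arrows of the mutated vertex are nonzero in colour; the hypotheses ``$c_{k-1}=0$ and $c_k\neq 0$'' (for $\mu_k$) and ``$c_{j-1}=0$ and $c_j=1$'' (for the subsequent shifts), together with $m\geq 1$, are exactly what guarantees this, making the inductive construction go through cleanly.
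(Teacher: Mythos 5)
Your argument is correct, but it runs in the opposite direction to the paper's proof and uses a different mechanism. The paper starts from the all-zero colouration and builds an arbitrary tuple $(c_1,\ldots,c_{n-1})$ inductively from the leaf, applying prescribed powers of mutations at two adjacent vertices (of the form $\mu_{k+1}^{d_{k+1}}$ followed by $\mu_k^{c_k+c_{k+1}-m}$, with auxiliary colours $d_i$ chosen in advance); you instead reduce an arbitrary colouration to the canonical all-zero one by a ``decrement and carry'' routine: $\mu_k$ turns $c_k$ into $c_k-1$ and deposits a $1$ on the previous arrow, which is then shifted leaf-wards by $\mu_{k-1},\ldots,\mu_2$ and absorbed by the leaf mutation $\mu_1$. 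Your observation that step (1) of the alternative algorithm never fires (both out-arrows at each mutated vertex have colours $m\neq 0$ and $c\neq 0$) is exactly what keeps the underlying graph of type $\mathbb{A}_n$ throughout, and it makes every single mutation transparent, whereas the paper's route requires tracking the net effect of mutation powers. Two remarks. First, since the paper defines mutation equivalence as reachability in both directions, your closing ``by transitivity'' needs the reverse sequences; these exist because every intermediate quiver in your procedure is again an $\mathbb{A}_n$-colouration, hence lies in $\mathcal{Q}_n^m$, where Lemma \ref{mutacion invertible} gives $\mu_v^{m+1}=\mathrm{Id}$. You should cite that lemma explicitly, but this is the same implicit reduction behind the paper's opening ``it suffices to show'', so it is not a gap in substance. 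Second, the paper's version (see the remark following the lemma) is really a relative statement with an arbitrary subquiver $Q'$ attached at one end, which is what gets used later; your procedure adapts to that setting without change, since your mutations occur only at vertices of the path and never create arrows, but as written you only state the absolute case.
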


\begin{proof}
It suffices to show that we can reach  every colouration of an $\mathbb{A}_n$-quiver by applying a sequence of mutation  to the following  particular colouration $ \xymatrix{n \ar^{0}[r] & n-1 \ar^{0}[r] &  \cdots  \ar^{0}[r] & 2  \ar^{0}[r] & 1  }$.

Clearly, for any colour $d_1 \in \{0,\cdots,m\}$, applying $\mu_1^{d_1}$  to the above quiver we obtain

$$  \xymatrix{n \ar^{0}[r] & n-1 \ar@{.}[r] & 3  \ar^{0}[r] & 2  \ar^{d_1}[r] & 1  }$$

\vspace*{.5cm}

Now, assume that we have coloured the first $k$ arrows  with any colours $d_1, d_2, \cdots, d_k \in \{0,\cdots,m\}$ and the remaining  arrows are still coloured with $0$. That is,

$$  \xymatrix{n \ar^{0}[r] & n-1  \ar@{.}[r]  & k+2 \ar^{0}[r] &  k+1 \ar^{d_k}[r]  & k  \ar^{d_{k-1}}[r] &  k-1 \ar@{.}[r] &  2 \ar^{d_{1}}[r] & 1  }$$

\vspace*{.5cm}

We want to show that we can  colour the first $k+1$ arrows  with  given colours $c_1, c_2, \cdots, c_{k+1} \in \{0,\cdots,m\}$ and leave the rest coloured with $0$. To this end,  consider $d_i=c_i$ for $ 1\leq i\leq k-2$,  $d_{k-1}=c_{k-1}+c_{k}+c_{k+1}-m$ and $d_{k}=m$ in the above coloured quiver. After applying the composition
$ \mu_k^{c_k+c_{k+1}-m}\circ \mu_{k+1}^{d_{k+1}}$  we obtain

$$  \xymatrix{n \ar^{0}[r] & n-1  \ar@{.}[r]  & k+2 \ar^{c_{k+1}}[r] &  k+1 \ar^{c_k}[r]  & k  \ar^{c_{k-1}}[r] &  k-1 \ar@{.}[r] &  2 \ar^{c_{1}}[r] & 1  }$$

We continue in this fashion obtaining any colouration $ \xymatrix{n \ar^{c_{n-1}}[r] & n-1 \ar^{c_{n-2}}[r] &  \cdots  \ar^{c_2}[r] & 2  \ar^{c_1}[r] & 1  }$.

%
%
%
%
%
%
%
%
%
%
%
%
\end{proof}

\begin{obs}
In fact, we have proved that the quiver $ \xymatrix{ *+[o][F]{Q'} \ar^{0}[r] & n-1 \ar^{0}[r] &  \cdots  \ar^{0}[r] & 2 \ar^{0}[r] & 1  }$ is mutation equivalent to the  quiver $ \xymatrix{ *+[o][F]{Q'} \ar^{c_{n-1}}[r] & n-1 \ar^{c_{n-2}}[r] &  \cdots  \ar^{c_2}[r] & 2 \ar^{c_1}[r] & 1  }$  where the decoration $\SelectTips{eu}{10}\xymatrix@R=.2pc@C=.8pc{ *+[o][F]{Q'}}$ means that $Q'$ can be any subquiver.

\end{obs}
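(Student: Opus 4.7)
The plan is to revisit the proof of the preceding lemma and verify that the entire inductive construction goes through verbatim when the endpoint vertex $n$ is replaced by an arbitrary attached subquiver $Q'$. A quick audit of the colouring scheme shows that the mutations used there are $\mu_1^{d_1}$ followed by iterated compositions $\mu_k^{\alpha_k}\circ\mu_{k+1}^{\beta_k}$ for $k=1,\ldots,n-2$; in particular, no mutation is ever performed at the endpoint vertex $n$. In the generalised quiver with $Q'$ attached in place of $n$, the same exponents may be applied, and no mutation touches any vertex of $Q'$.

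Next I would analyse how each mutation in this sequence interacts with $Q'$. For $v\in\{1,2,\ldots,n-2\}$, the neighbourhood of $v$ is contained entirely in the tail, so by the definition of coloured mutation (both the multiplicity formula and the alternative algorithm) neither the colours of arrows inside $Q'$ nor the attaching arrow can be affected. The only delicate case is $v=n-1$, whose neighbourhood contains the vertex $q\in Q'$ through which $Q'$ is attached to the tail. Here I would check two points: first, that no arrow between $n-2$ and $q$ is created by step $(1)$ of the alternative mutation algorithm, and second, that the colour of the attaching arrow $q\to n-1$ evolves in exactly the same way as the colour of $n\to n-1$ did in the original proof.

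Both points follow from the local nature of coloured mutation: $\mu_v$ is determined entirely by the multiplicities $q_{ij}^{(c)}$ with $i,j$ neighbours of $v$. Replacing $n$ by $q$ leaves these multiplicities identical at every stage of the sequence, so $\mu_{n-1}$ reproduces, step by step, its behaviour from the original $\mathbb{A}_n$ argument. Since the intermediate quivers in the lemma's proof remain simple $\mathbb{A}_n$-quivers (so no edge $(n-2)(n)$ is ever produced), no edge $(n-2)q$ is created in the extended setting either, and $Q'$ together with its attaching arrow is preserved, up to the prescribed colour change on that arrow, from the all-zero quiver to the quiver with tail coloured $c_1,\ldots,c_{n-1}$.

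The main obstacle — or rather, the only point requiring attention — is the locality argument for $\mu_{n-1}$: one must be confident that step $(1)$ of the algorithm cannot silently introduce an arrow between $n-2$ and some vertex of $Q'$, nor alter any arrow inside $Q'$. I would dispatch this by the substitution principle above (the mutation rule sees only the colour data on the neighbourhood of $n-1$, which is identical in both quivers) rather than by tracking each of the roughly $n$ mutation steps individually.
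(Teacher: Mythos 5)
Your proposal is correct and follows essentially the same route as the paper: the remark is justified there simply by observing that the lemma's proof only ever mutates at vertices $1,\ldots,n-1$, so the endpoint vertex $n$ can be replaced by an arbitrary attached subquiver $Q'$ without changing any step. Your explicit locality argument (mutation at $v$ only affects arrows incident to $v$ or between neighbours of $v$, and the local data at $n-1$ is identical after the substitution) is exactly the reason the paper's proof carries over, just spelled out in more detail than the paper bothers to do.
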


Let $Q$ be a coloured quiver belonging to the class $\mathcal{Q}_n^m$  containing an almost extremal clique $\mathcal{C}$ with a connected component of  $Q\setminus \mathcal{A(C)}$ of type $\mathbb{A}_2$. The following lemma shows that  $Q$ is mutation equivalent to a quiver $Q'$ where $\mathcal{C}$ was moved  by transforming into  an extremal clique.

\begin{lema}\label{transformar un ciclo casi extremal en uno extremal}

Let $\mathcal{C}$ be an almost extremal clique with $n+1$ vertices $v,v_1,\cdots, v_{n}$.
Assume that $v$ has exactly $n+1$ neighbours: the vertices $v_1, \cdots, v_{n}$ and another vertex $w_1$; in such a way that the vertex $v$ together with the vertex $w_1$  form a clique with $2$ vertices. Moreover, the vertex $w_1$ does not have any other neighbour. We illustrate the situation in the following quiver $Q$:

\[
\xy/r5pc/:{\xypolygon6"A"{~<{}~>{}{}}}
\POS"A6" \drop{ \begin{array}{c} \\  v_{n-1}\end{array}}
\POS"A2" \ar@{-} "A1"
\POS"A4" \ar@{->}  ^{d}   "A3"
\POS"A4" \ar@{->}  ^{c_1}   "A2"
\POS"A4" \ar@{->}  ^{c_2}   "A1"
\POS"A4" \ar@{->}  ^{c_{n-1}}   "A6"
\POS"A4" \ar@{->}  _{c_{n}}   "A5"
\POS"A4" \ar@{-}   "A5"
\POS"A5" \ar@{-}   "A6"
\POS"A6" \ar@{.}   "A1"
\POS"A5" \drop{ \begin{array}{c} \\  v_{n}\end{array}}
\POS"A4" \drop{\begin{array}{ccc} v &&  \end{array}}
\POS"A1" \drop{ \begin{array}{ccc} && v_{2}\end{array}}
\POS"A2" \drop{ \begin{array}{ccc} && v_{1}\\ && \end{array}}
\POS"A3" \drop{ \begin{array}{ccc}&w_{1}& \\ && \end{array}}

\POS"A2" \ar@{--} "A5"
\POS"A2" \ar@{--} "A6"
\POS"A3" \ar@{--} "A4"
\POS"A1" \ar@{--} "A5"
\endxy   \]

Then, $Q$ is mutation equivalent to the  quiver:

\[
\xy/r5pc/:{\xypolygon6"A"{~<{}~>{}{}}}
\POS"A6" \drop{ \begin{array}{c} \\  v_{n-1}\end{array}}
\POS"A2" \ar@{-} "A1"
\POS"A3" \ar@{-}   "A2"
\POS"A4" \ar@{->}  ^{m}   "A3"
\POS"A4" \ar@{->}  _{ m}   "A2"
\POS"A4" \ar@{->}  ^{c_2-c_1-1}   "A1"
\POS"A4" \ar@{->}  ^{c_{n-1}-c_1-1}   "A6"
\POS"A4" \ar@{->}  _{c_{n}-c_1-1}   "A5"
\POS"A6" \ar@{.}   "A1"
\POS"A5" \drop{ \begin{array}{c} \\  v_{n}\end{array}}
\POS"A4" \drop{\begin{array}{ccc} v &&  \end{array}}
\POS"A1" \drop{ \begin{array}{ccc} && v_{2}\end{array}}
\POS"A2" \drop{ \begin{array}{ccc} && v_{1}\\ && \end{array}}
\POS"A3" \drop{ \begin{array}{ccc}&w_{1}& \\ && \end{array}}

\POS"A3" \ar@{--} "A1"
\POS"A2" \ar@{--} "A6"
\POS"A3" \ar@{--} "A6"
\POS"A3" \ar@{--} "A6"
\endxy   \]

where  $w_1$ has exactly $n$ neighbours and the clique formed by the vertices  $v,w_1,v_1, \cdots,v_{n-1}$ is an extremal clique of size $n+1$.

\end{lema}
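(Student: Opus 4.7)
The plan is to apply a two-step mutation sequence to $Q$. Since $w_1$'s only neighbour in $Q$ is $v$, applying $\mu_{w_1}$ never creates new arrows (the condition $i\neq k$ of Step (1) of the alternative algorithm would force any putative new arrow $i\to k$ through $w_1$ to satisfy $i=k=v$), so $\mu_{w_1}$ simply shifts the colour of $v\to w_1$ by $+1$ modulo $m+1$ and leaves the rest of $Q$ untouched. As a preparatory step, apply $\mu_{w_1}$ the appropriate number of times, namely $c_1-d \pmod{m+1}$, so as to bring $Q$ to a state in which the two arrows $v\to v_1$ and $v\to w_1$ carry the same colour $c_1$.

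Second, apply $\mu_v^{c_1+1}$. During the first $c_1$ iterations no arrow leaving $v$ has colour $0$, so these iterations produce no structural change and only shift colours at $v$ by $-1$ each. At the $(c_1+1)$-th iteration the arrows $v\to v_1$ and $v\to w_1$ simultaneously have colour $0$, placing us squarely in Case~(3) of the proof of Lemma~\ref{cerrado por mutaciones}. That step exchanges $v_1$ and $w_1$ between the two cliques meeting at $v$: the vertex $w_1$ is absorbed into the large clique, while $v_1$ is expelled and becomes the unique neighbour of $v$ lying outside the enlarged clique.

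The resulting quiver matches the one displayed in the statement (after the cosmetic relabelling that renames the new pendant $v_1$ as $v_n$ and correspondingly shifts the indices of the remaining vertices of the clique). Indeed, the arrows $v\to v_i$ with $i\geq 2$ shift from $c_i$ to $c_i-c_1-1$, and the arrows $v\to v_1$ and $v\to w_1$ both shift from $c_1$ to $c_1-(c_1+1)\equiv m \pmod{m+1}$; the new internal arrows $w_1\to v_i$ introduced by Case~(3) emerge with colours $c_i-c_1$, which is precisely the value forced by Lemma~\ref{triangulos admisibles} on each new triangle $\{v,w_1,v_i\}$. Moreover, the enlarged clique is \emph{extremal}, not just almost extremal: the vertex $w_1$ had no external neighbour in $Q$ and the mutation added arrows only between $w_1$ and the other members of the new clique, so deleting $\mathcal{A(C')}$ isolates $w_1$ as an $\mathbb{A}_1$ connected component of $Q'\setminus\mathcal{A(C')}$.

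The main delicacy lies in the colour bookkeeping through the Case~(3) step. One must verify both that the former internal arrows $v_1\to v_i$ of the old clique are cancelled by the monochromaticity step of the algorithm, so that $v_1$ is indeed disconnected from $\{v_2,\ldots,v_n\}$ in $Q'$, and that the newly created arrows $w_1\to v_i$ emerge with exactly the colour $c_i-c_1$ demanded by the clique-colouring formula of Lemma~\ref{triangulos admisibles}; both verifications are routine applications of the mutation rules together with the relation $c_{ij}=c_j-c_i-1$ satisfied inside any clique of $\mathcal{Q}_n^m$.
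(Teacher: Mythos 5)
Your proposal takes essentially the same route as the paper's own proof: first normalize the colour $d$ to $c_1$ by mutating at $w_1$ (your exponent $c_1-d \pmod{m+1}$ is the paper's $m-d+c_1+1$), then apply $\mu_v^{c_1+1}$, whose first $c_1$ iterations only shift colours and whose last iteration is exactly the Case~(3) swap of Lemma \ref{cerrado por mutaciones}, with the same colour bookkeeping $c_i\mapsto c_i-c_1-1$ and new arrows $w_1\to v_i$ of colour $c_i-c_1$. Two small points to make explicit: state the normalization $c_1<c_2<\cdots<c_n$ (as the paper does), since your claim that no arrow leaving $v$ has colour $0$ during the first $c_1$ iterations depends on $c_1$ being minimal, and invoke Lemma \ref{mutacion invertible} to obtain the reverse mutation sequence required by the paper's definition of mutation equivalence.
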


\begin{proof}
Observe that the vertices $v_1, \cdots, v_{n}$ can possibly belong each of them also to at most another clique in such a way that condition 1 of the definition of $\mathcal{Q}_n^m$ is satisfied.

Label $c_i$ the colour of the arrow $\xymatrix{v \ar[r]^{c_{i}} & v_i}$ and $d$ the colour of the arrow $\xymatrix{v \ar[r]^{d} & w_1}$.
By Lemma \ref{triangulos admisibles}, we know that all the colours $c_i$ are distinct. Hence we can assume that $c_1<c_2<\cdots < c_{n}$.

We can certainly assume that $c_1=d$. Otherwise, we apply  $\mu_{w_1}^{m-d+c_1+1}$. Then, after applying $\mu_{v}^{c_1}$ we obtain:

\[
\xy/r5pc/:{\xypolygon6"A"{~<{}~>{}{}}}
\POS"A6" \drop{ \begin{array}{c} \\  v_{n-1}\end{array}}
\POS"A2" \ar@{-} "A1"
\POS"A4" \ar@{->}  ^{0}   "A3"
\POS"A4" \ar@{->}  ^{0}   "A2"
\POS"A4" \ar@{->}  ^{c_2-c_1}   "A1"
\POS"A4" \ar@{->}  ^{c_{n-1}-c_1}   "A6"
\POS"A4" \ar@{->}  _{c_{n}-c_1}   "A5"
\POS"A4" \ar@{-}   "A5"
\POS"A5" \ar@{-}   "A6"
\POS"A6" \ar@{.}   "A1"
\POS"A5" \drop{ \begin{array}{c} \\  v_{n}\end{array}}
\POS"A4" \drop{\begin{array}{ccc} v &&  \end{array}}
\POS"A1" \drop{ \begin{array}{ccc} && v_{2}\end{array}}
\POS"A2" \drop{ \begin{array}{ccc} && v_{1}\\ && \end{array}}
\POS"A3" \drop{ \begin{array}{ccc}&w_{1}& \\ && \end{array}}

\POS"A2" \ar@{--} "A5"
\POS"A2" \ar@{--} "A6"
\POS"A3" \ar@{--} "A4"
\POS"A1" \ar@{--} "A5"
\endxy   \]

and after applying $\mu_v$ again we get the desired quiver. Observe that Lemma \ref{mutacion invertible} allows us to reverse this procedure and show that the quiver $Q$ can be obtained from $\mu_v^{c_1+1}(Q)$ by a sequence of mutations.

\end{proof}

\begin{obs}

Let $Q$ be a coloured quiver belonging to the class $\mathcal{Q}_n^m$  containing an almost extremal clique $\mathcal{C}$ with a connected component of  $Q\setminus \mathcal{A(C)}$ of type $\mathbb{A}_k$ for  $k\geq 3$. The same proof of the above lemma shows that  $Q$ is mutation equivalent to a quiver $Q'$ where $\mathcal{C}$ was "moved"  by transforming into another almost extremal clique $\mathcal{C'}$ with a connected component of  $Q'\setminus \mathcal{A(C')}$ of type $\mathbb{A}_{k-1}$.

\end{obs}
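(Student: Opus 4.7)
The plan is to carry over the proof of Lemma~\ref{transformar un ciclo casi extremal en uno extremal} essentially verbatim, exploiting the fact that the mutations performed there only affect $v$ and its immediate neighbours, so the rest of the tail is merely carried along passively. Set up notation as in that lemma: let $v$ be the articulation vertex of $\mathcal{C}$, let $v_1,\ldots,v_n$ be the remaining neighbours of $v$ inside $\mathcal{C}$ with colours $c_1<c_2<\cdots<c_n$ (Lemma~\ref{triangulos admisibles}), and let $v\to w_1\to w_2\to\cdots\to w_{k-1}$ be the $\mathbb{A}_k$-component of $Q\setminus\mathcal{A}(\mathcal{C})$, with $d$ denoting the colour of $v\to w_1$.

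After a preparatory step that normalises $d=c_1$, I would apply $\mu_v^{c_1+1}$. The key observation is that in the mutation formula at $v$, every contribution from the tail other than through $w_1$ involves $q^{(c)}_{v w_i}$ for some $i\geq 2$, and this is identically zero because $w_2,\ldots,w_{k-1}$ are not neighbours of $v$; consequently the entire subpath $w_1\to w_2\to\cdots\to w_{k-1}$ is left intact by $\mu_v^{c_1+1}$. The effect on the local configuration $\mathcal{C}\cup\{w_1\}$ is then identical to the one computed in Lemma~\ref{transformar un ciclo casi extremal en uno extremal}: the vertex $v_n$ leaves $\mathcal{C}$, the vertex $w_1$ is absorbed, and the new clique $\mathcal{C}'=\{v,w_1,v_1,\ldots,v_{n-1}\}$ has size $n+1$.

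Identifying the required $\mathbb{A}_{k-1}$-component of $Q'\setminus\mathcal{A}(\mathcal{C}')$ is then immediate: removing the arrows of $\mathcal{C}'$ severs $v$ from $w_1$, so the subpath $w_1\to w_2\to\cdots\to w_{k-1}$, which has $k-1$ vertices and survived the mutation, constitutes a connected component on its own (since $w_2,\ldots,w_{k-1}$ were internal vertices of the original $\mathbb{A}_k$-component and therefore have no neighbours outside this path). Closure of $\mathcal{Q}_n^m$ under coloured mutation (Lemma~\ref{cerrado por mutaciones}) yields $Q'\in\mathcal{Q}_n^m$, so $\mathcal{C}'$ is an almost extremal clique of $Q'$ with the desired $\mathbb{A}_{k-1}$-component.

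The main subtle point is the preparatory normalisation of $d$. The previous lemma uses $\mu_{w_1}^{m-d+c_1+1}$, which is harmless there because $w_1$ has only $v$ as neighbour; in the present setting the extra neighbour $w_2$ of $w_1$ means that intermediate applications of $\mu_{w_1}$ could transiently create a new edge $v\leftrightarrow w_2$ and thereby break the shape of the tail. I expect this to be the main obstacle, and I would handle it by replacing the preparation with a mutation that does not touch $w_1$---for instance $\mu_{v_1}^{d-c_1}$, which shifts $c_1$ without affecting the tail and is structurally harmless inside $\mathcal{C}$ as long as $c_1\neq m$, the boundary case being dealt with by a preliminary $\mu_v$ that rotates the colours at $v$.
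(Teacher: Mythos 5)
Your main step is the right one and it is essentially the argument the paper intends: after the normalisation $d=c_1$, the run $\mu_v^{c_1+1}$ only involves $v$ and its neighbours, so the subpath $w_1\to w_2\to\cdots\to w_{k-1}$ is carried along untouched (its vertices beyond $w_1$ are not neighbours of $v$, hence none of the correction terms in the mutation at $v$ see them), the last application of $\mu_v$ performs the swap of case (3) of Lemma \ref{cerrado por mutaciones}, the component of $Q'\setminus\mathcal{A}(\mathcal{C}')$ containing $w_1$ is the path on $w_1,\ldots,w_{k-1}$, of type $\mathbb{A}_{k-1}$, and closure under mutation gives $Q'\in\mathcal{Q}_n^m$. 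You also correctly identify the one point where the proof of Lemma \ref{transformar un ciclo casi extremal en uno extremal} cannot be copied verbatim: mutating at $w_1$ to normalise $d$ may now create an edge between $v$ and $w_2$.

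The gap is in your repair of that normalisation. Mutation at $v_1$ is not structurally harmless under the sole hypothesis $c_1\neq m$: the arrows leaving $v_1$ also include $v_1\to v_j$ of colour $c_j-c_1-1$ (already $0$ whenever $c_2=c_1+1$) and the arrows from $v_1$ into its possible second clique, and as soon as one of these has colour $0$ an application of $\mu_{v_1}$ triggers a case (2)/(3) move of Lemma \ref{cerrado por mutaciones} at $v_1$: a vertex is expelled from $\mathcal{C}$, or a vertex of the second clique of $v_1$ is absorbed into $\mathcal{C}$, thereby creating or destroying neighbours of $v$ and changing the configuration you need for the main step. If $d<c_1$ the exponent $d-c_1$ must be read modulo $m+1$, and then the arrow $v_1\to v$ itself passes through colour $0$ during the run, so your caveat does not even cover that arrow. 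Finally, even when no colour $0$ is met, after the shift the common value $d$ need not be the minimum of the colours at $v$ inside $\mathcal{C}$, and then the subsequent power of $\mu_v$ meets intermediate colour-$0$ arrows out of $v$ and no longer reproduces the two-stage computation of the lemma. The normalisation the paper has already prepared is the correct one: the lemma stating that all colourations of $\mathbb{A}_n$ are mutation equivalent, together with the remark with the boxed subquiver that follows it, lets you recolour the tail $v\to w_1\to\cdots\to w_{k-1}$ by mutations at tail vertices only, leaving the rest of $Q$ unchanged, so that $d=c_1$ holds from the start (and Lemma \ref{mutacion invertible} makes this a genuine equivalence). With that replacement of your preparatory step, the rest of your argument goes through.
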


We ilustrate the case $k=3$: the quiver $ \xymatrix{ w_2 &   w_1 \ar^{}[l] & *+[o][F]{\mathcal{C}} \ar^{}[l]}$ is mutation equivalent to the  quiver $\xymatrix{ w_2 & *+[o][F]{\mathcal{C}'} \ar^{}[l] & v_n \ar^{}[l]}$.\\

The following lemma shows that we can reduce the size of an extremal clique using coloured mutations or  more precisely;

\begin{lema}\label{transformar un ciclo extremal de tamaño n en uno de tamaño n-1}
Assume that $v$ is a vertex with exactly $n-1$ neighbours $v_1, \cdots, v_{n-1}$, all of them belonging to the same clique $\mathcal{C}_n$ as in the following diagram:

\[
\xy/r5pc/:{\xypolygon6"A"{~<{}~>{}{}}}

\POS"A6" \drop{ \begin{array}{c} \\  v_{n-2}\end{array}}
\POS"A2" \ar@{-} "A1"
\POS"A3" \ar@{-}   "A2"
\POS"A4" \ar@{->}  ^{c_1}   "A3"
\POS"A4" \ar@{->}  ^{c_2}   "A2"
\POS"A4" \ar@{->}  ^{c_3}   "A1"
\POS"A4" \ar@{->}  ^{c_{n-2}}   "A6"
\POS"A4" \ar@{->}  _{c_{n-1}}   "A5"
\POS"A4" \ar@{-}   "A5"
\POS"A5" \ar@{-}   "A6"
\POS"A6" \ar@{.}   "A1"
\POS"A5" \drop{ \begin{array}{c} \\  v_{n-1}\end{array}}
\POS"A4" \drop{\begin{array}{ccc} v &&  \end{array}}
\POS"A1" \drop{ \begin{array}{ccc} && v_{3}\end{array}}
\POS"A2" \drop{ \begin{array}{ccc} && v_{2}\\ && \end{array}}
\POS"A3" \drop{ \begin{array}{ccc} v_{1}&& \\ && \end{array}}

\POS"A2" \ar@{--} "A5"
\POS"A2" \ar@{--} "A6"
\POS"A3" \ar@{--} "A4"
\POS"A3" \ar@{--} "A5"
\POS"A3" \ar@{--} "A6"
\POS"A3" \ar@{--} "A1"
\POS"A1" \ar@{--} "A5"

\endxy   \]

Then, $\mathcal{C}_n$ is mutation equivalent to the  quiver:

\[
\xy/r5pc/:{\xypolygon6"A"{~<{}~>{}{}}}

\POS"A6" \drop{ \begin{array}{c} \\  v_{n-2}\end{array}}
\POS"A2" \ar@{-} "A1"
\POS"A4" \ar@{->}  ^{m}   "A3"
\POS"A4" \ar@{->}  ^{c_2-1}   "A2"
\POS"A4" \ar@{->}  ^{c_3-1}   "A1"
\POS"A4" \ar@{->}  ^{c_{n-2}-1}   "A6"
\POS"A4" \ar@{->}  _{c_{n-1}-1}   "A5"
\POS"A4" \ar@{-}   "A5"
\POS"A5" \ar@{-}   "A6"
\POS"A6" \ar@{.}   "A1"
\POS"A5" \drop{ \begin{array}{c} \\  v_{n-1}\end{array}}
\POS"A4" \drop{\begin{array}{ccc} v &&  \end{array}}
\POS"A1" \drop{ \begin{array}{ccc} && v_{3}\end{array}}
\POS"A2" \drop{ \begin{array}{ccc} && v_{2}\\ && \end{array}}
\POS"A3" \drop{ \begin{array}{ccc}&v_{1}& \\ && \end{array}}

\POS"A2" \ar@{--} "A5"
\POS"A2" \ar@{--} "A6"
\POS"A3" \ar@{--} "A4"
\POS"A1" \ar@{--} "A5"
\endxy   \]

 where the vertex $v$ together with the vertices $v_2,\cdots, v_{n-1}$ form a clique with $n-1$ vertices and together with the vertex $v_1$ a clique with $2$ vertices.
\end{lema}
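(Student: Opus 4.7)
The plan is to reduce the lemma to a single application of the mutation $\mu_v$, after a preliminary normalisation that places the colour $0$ on one of the arrows leaving $v$; the rest is the case analysis already established in Lemma~\ref{cerrado por mutaciones}.

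By Lemma~\ref{triangulos admisibles} the colours $c_1,c_2,\ldots,c_{n-1}$ are pairwise distinct, and we may assume $c_1$ is the smallest. If $c_1>0$, first apply $\mu_v^{c_1}$; during each of these iterations no outgoing arrow of $v$ has colour $0$, so step~(1) of the alternative algorithm is vacuous and only the colour-shift of step~(3) takes place, leaving the underlying quiver unchanged. Hence we may suppose, as in the diagram displayed in the statement, that $c_1=0$.

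Now apply $\mu_v$ once. Step~(1) of the alternative algorithm adds, for each $i\ge 2$, an arrow $v_1\xrightarrow{c_i}v_i$, produced by the path $v_i\xrightarrow{m-c_i}v\xrightarrow{0}v_1$. By Lemma~\ref{triangulos admisibles} the clique $\mathcal{C}_n$ already contained an arrow $v_1\xrightarrow{c_i-1}v_i$. Step~(2) then enforces monochromaticity, and since there is exactly one arrow of each of the colours $c_i-1$ and $c_i$, both are cancelled, disconnecting $v_1$ from every $v_i$ with $i\ge 2$. Step~(3) shifts the colour of $v\to v_1$ from $0$ to $m$, and the colour of each $v\to v_i$ from $c_i$ to $c_i-1$. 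The arrows $v_i\to v_j$ with $i,j\ge 2$ are untouched, so $\{v,v_2,\ldots,v_{n-1}\}$ still spans a complete subquiver of size $n-1$, and its triangle condition is preserved because $(c_i-1)+(c_j-c_i-1)+(m-c_j+1)=m-1$ for $2\le i<j\le n-1$. Together with the new $2$-clique $\{v,v_1\}$, this is exactly the quiver displayed in the statement; the component of $v_1$ in the complement of the $(n-1)$-clique is a single vertex, so that clique is extremal.

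The delicate point is step~(2): one must know that the arrow $v_1\to v_i$ already present in $Q$ has colour precisely one less than the arrow introduced by the algorithm, so that the cancellation is complete and no residual arrow survives. This is exactly what Lemma~\ref{triangulos admisibles} provides for the triangle $v,v_1,v_i$. The remaining verifications (that condition~(1) of Definition~\ref{la clase} continues to hold at every vertex, with the two cliques at $v$ now of sizes $n-1$ and $2$ summing to $(n-1)+1=z+2$; the absence of new holes; and the reversibility of the procedure) are already covered by Lemmas~\ref{cerrado por mutaciones} and~\ref{mutacion invertible}.
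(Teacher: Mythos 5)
Your proposal is correct and takes essentially the same route as the paper: normalise by $\mu_v^{c_1}$ so that the arrow $v\to v_1$ has colour $0$, then apply $\mu_v$ once, the effect being exactly the computation carried out in case (2) of Lemma \ref{cerrado por mutaciones} (cancellation of the arrows $v_1\to v_i$ via Lemma \ref{triangulos admisibles}), with Lemma \ref{mutacion invertible} giving reversibility. The only slip is your closing aside: after the mutation the component of $v_1$ in $Q'\setminus\mathcal{A}(\mathcal{C}'_{n-1})$ still contains $v$ (and any further neighbours $v_1$ may have along the dashed edges), so the new $(n-1)$-clique is in general only almost extremal rather than extremal --- but this remark is not part of the statement and does not affect your argument.
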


\begin{proof}
Observe that the vertices $v_1, \cdots, v_{n-1}$ can possibly belong each of them also to at most another clique in such a way that condition 1 of the definition of $\mathcal{Q}_n^m$ is satisfied.

Without loss of generality we can assume that $c_1=0$. Otherwise, we apply $\mu_v^{c_1}$.  Then, the claim follows after applying $\mu_v$. Observe that Lemma \ref{mutacion invertible} allows us to reverse this procedure.

\end{proof}

\vspace*{.5cm}

Now, we are ready to proof the main result of this paper.\\

\begin{teo}
An $m$-coloured quiver $Q$ is mutation equivalent to $\mathbb{A}_n$ if and only if it is in  $\mathcal{Q}_n^m$.
\end{teo}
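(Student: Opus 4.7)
\medskip

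\noindent\textbf{Proof plan.} The theorem has two directions. For the forward direction I would observe that the standard $\mathbb{A}_n$ quiver (with arrows coloured only $0$ and $m$) lies in $\mathcal{Q}_n^m$: every vertex has at most two neighbours, each forming a $2$-clique with it, there are no induced cycles at all, and the triangle condition of Definition \ref{la clase} is vacuously satisfied. The conclusion then follows at once from Lemma \ref{cerrado por mutaciones}, which states that $\mathcal{Q}_n^m$ is closed under coloured mutation.

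For the non-trivial direction $\mathcal{Q}_n^m \subseteq $ (mutation class of $\mathbb{A}_n$), my plan is induction on the complexity measure $f(Q) := \sum_{\mathcal{C}} (|\mathcal{C}|-2)$, where the sum runs over all maximal cliques $\mathcal{C}$ of $Q$ of size at least $3$. The base case $f(Q) = 0$ means that $Q$ has only $2$-cliques; combined with condition (1) of Definition \ref{la clase} this forces every vertex to have valency at most $2$, so together with hole-freeness and connectedness we conclude that $Q$ is a coloured $\mathbb{A}_n$-quiver, and the lemma establishing that all colourations of $\mathbb{A}_n$ are mutation equivalent completes this case.

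For the inductive step, assume $f(Q) > 0$. Lemma \ref{existencia de ciclos casi extremales} produces an almost extremal clique $\mathcal{C}$ with some $\mathbb{A}_k$-component in $Q \setminus \mathcal{A(C)}$. If $k \geq 2$, I would iteratively apply Lemma \ref{transformar un ciclo casi extremal en uno extremal} (together with the generalization stated in the remark following it) to \emph{slide} the clique along the tail, shortening its $\mathbb{A}_k$-component by one unit per step, until $\mathcal{C}$ becomes extremal ($k=1$). Then Lemma \ref{transformar un ciclo extremal de tamaño n en uno de tamaño n-1} peels off one vertex from this extremal clique, yielding a mutation equivalent quiver $Q'' \in \mathcal{Q}_n^m$ with $f(Q'') = f(Q) - 1$, to which the induction hypothesis applies.

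The main difficulty is hidden in the preparatory lemmas: the delicate combinatorial bookkeeping of how the colours of all remaining arrows evolve during the sliding and shrinking operations, together with checking that the resulting quiver remains in $\mathcal{Q}_n^m$ throughout (which is already guaranteed by Lemma \ref{cerrado por mutaciones}, but must still be matched with the explicit colour identities produced by Lemmas \ref{transformar un ciclo casi extremal en uno extremal} and \ref{transformar un ciclo extremal de tamaño n en uno de tamaño n-1}). Once those lemmas are available, the induction itself is immediate, since the shrinking step strictly decreases the nonnegative integer $f$.
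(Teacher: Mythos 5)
Your proposal is correct and takes essentially the same route as the paper: the forward direction via closure of $\mathcal{Q}_n^m$ under coloured mutation (Lemma \ref{cerrado por mutaciones}), and the reverse direction by repeatedly choosing an almost extremal clique (Lemma \ref{existencia de ciclos casi extremales}), sliding it into an extremal one and shrinking it by one vertex via the sliding and shrinking lemmas, until an $\mathbb{A}_n$-quiver is reached, all colourations of which are mutation equivalent. Your induction on the measure $f(Q)$ is only a (welcome) sharpening of the paper's informal termination argument that the number of cliques and their sizes are finite.
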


\begin{proof}

Obviously, all colourations of an  $\mathbb{A}_n$-quiver are in $\mathcal{Q}_n^m$. Let $Q$ be a coloured quiver belonging to $\mathcal{Q}_n^m$  which is not an  $\mathbb{A}_n$-quiver. We claim that $Q$ is mutation equivalent to an  $\mathbb{A}_n$-quiver.  It follows upon executing the following: \newline

{\bf Algorithm. } \begin{description}
 \item[Step 1] Choose an almost extremal clique  $\mathcal{C}$ in $Q$.
 \item[Step 2] Use the processes described in lemmata \ref{transformar un ciclo casi extremal en uno extremal} and \ref{transformar un ciclo extremal de tamaño n en uno de tamaño n-1} to obtain a new coloured quiver $Q'$ which is mutation equivalent to $Q$. In $Q'$ the clique $\mathcal{C}$  was transformed into a new clique $\mathcal{C}'$ with exactly one vertex less.
 \item[Step 3] If $Q'$ is an $\mathbb{A}_n$-quiver we are done. If not, go to step 1 with $Q'$ playing the r\^ole of $Q$.
\end{description}

This process must stop after a finite number of steps, since the number of cliques; and their sizes,  are finite.\\

Lemma \ref{mutacion invertible} allows us to reverse this procedure and show that every quiver of $\mathcal{Q}_n^m$ can be reached by iterated mutation on an $\mathbb{A}_n$-quiver.  The proof is completed by noting that, by Lemma \ref{cerrado por mutaciones}, the class $\mathcal{Q}_n^m$ is closed under coloured mutation.
\end{proof}

\section{Weight induced by the colouration and clique number of a coloured quiver }


Throughout this section we assume $Q$ be a simple $m$-coloured quiver with  colouration function $\kappa:Q_1 \rightarrow \{0,1,\ldots, m\}$. Denote by $\mathcal{P}$ the set of all non-trivial paths in $Q$. We start this section defining  a weight function in the set $\mathcal{P}$ which is induced by the colouration $\kappa$.\\

\begin{defi}
The weight function $w:\mathcal{P}\rightarrow \mathbb{N}$ induced by the colouration $\kappa$ is defined in the following way: if $p=x_1\cdots x_k x_{k+1}$ is a non-trivial path in $Q$, then $$w(p):= \sum_{i=1}^{k}\kappa(x_{i}x_{i+1}).$$
\end{defi}

We also denote the weight of the path $x_1\cdots x_k x_{k+1}$ by $\overline{x_1\cdots x_k x_{k+1}}$. For example, the weight of the $3$-cycle $(xyz)$ is $\overline{(xyz)}= \overline{xy} + \overline{yz} + \overline{zx}=c_{xy}+c_{yz}+c_{zx}$.

 Next, we introduce a new quantity which is useful to our propose.\\

In the following  assume $k$ to be at least equal to $3$.

\begin{defi}
Let  $K=\{x_1,\ldots, x_k\}$  be a $k$-clique in $Q$. The energy of the clique $K$ is defined as the minimum possible weight of a $k$-cycle contained in $K$ ; i.e. 
$$  \delta(K):= \min\{ \overline{(x_{\theta(1)}x_{\theta(2)}\cdots x_{\theta(k)})}: \theta \in S_k  \}  $$ where  $S_k$ is the symmetric group over the set $\{1,2,\ldots,k\}$.
\end{defi}

Clearly $\delta(K)$ is greater or equal to $0$ and the equality $\delta(K)=0$ holds when there is an oriented cycle in $K$ with all its arrows coloured with $0$.  Corollary \ref{ciclo color cero} implies that the cliques  $\mathcal{C}$  in $\mathcal{Q}_n^m$ with exactly $m+2$ vertices have $\delta(\mathcal{C})=0$. We want to see  that if a $k$-clique $K$  has $\delta(K)=0$, then $k$ have to be exactly $m+2$.\\

 Remember that the Minkowski sum of $A,B \subseteq \mathbb{Z}$ is $A+B := \{a+b: a\in A, b\in B\}$. In particular, the $n$-fold Minkowski sum of a set $A\subseteq \mathbb{Z}$ with itself is denoted by $nA=A+A+\cdots+A$ ($n$ terms). If $A\subseteq \mathbb{Z}$ and $h\in \mathbb{Z}$, the translate of $A$ by $h$ is $A+h:= \{a+h: a\in A\}$.\\

It is proved in Remark \ref{Remark color triangulo} that every  triangle $(xyz)$ in some quiver in the mutation class of $\mathbb{A}_{n}$ satisfies $\overline{xy}+\overline{yz}+\overline{zx} \in \{m-1,2m+1\}$. This fact is equivalent to the following proposition:\\

\begin{prop}\label{prop-k3}
If $Q$ is an $m$-coloured quiver in the mutation class of $\mathbb{A}_{n}$ and $K$ is a $3$-clique in $Q$ then $\delta(K)=m-1$.
\end{prop}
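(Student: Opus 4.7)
The plan is to invoke the main theorem (Theorem A) to reduce the question to a purely structural statement about quivers in $\mathcal{Q}_n^m$, and then read off the answer directly from condition (2) of Definition \ref{la clase} together with Remark \ref{Remark color triangulo}.

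First, I would apply the main theorem to conclude that $Q \in \mathcal{Q}_n^m$. Since $K = \{x,y,z\}$ is a $3$-clique in $Q$, all three pairs of vertices are joined by a pair of arrows, so the induced subquiver on $K$ is a triangle in the sense of Definition \ref{la clase}(2). By that condition, if we label the colours of the arrows of one orientation as $c_{xy}, c_{yz}, c_{zx}$, then either
\[
c_{xy}+c_{yz}+c_{zx}=m-1 \qquad \text{or} \qquad (m-c_{xy})+(m-c_{yz})+(m-c_{zx})=m-1.
\]

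Next, I would observe that a $3$-cycle on the vertex set $K$ is, up to cyclic rotation, one of exactly two oriented cycles: $(xyz)$ and $(xzy)$, and their weights are precisely the two sums appearing above. By Remark \ref{Remark color triangulo}, whichever of these two sums is not equal to $m-1$ must equal $2m+1$. Therefore the set of weights of the $3$-cycles contained in $K$ is exactly $\{m-1, 2m+1\}$.

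Finally, since $m\geq 1$ we have $m-1 < 2m+1$, so
\[
\delta(K) = \min\{m-1,\, 2m+1\} = m-1,
\]
which is exactly what was to be shown. There is no real obstacle here: the statement is essentially an unpacking of the definition of $\mathcal{Q}_n^m$ together with the observation in Remark \ref{Remark color triangulo} about the two possible sums for the two orientations of a triangle. The only thing one has to check carefully is that every cyclic permutation of $\{x,y,z\}$ yields one of these two orientations, which is immediate because $S_3$ acts on the set of $3$-cycles through its quotient by the cyclic subgroup of rotations, leaving exactly two orbits.
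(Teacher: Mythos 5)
Your argument is correct and is essentially the paper's own: the paper likewise treats the proposition as a direct restatement of Remark \ref{Remark color triangulo} (via the main theorem and condition (2) of Definition \ref{la clase}), the two orientations of the triangle giving weights $m-1$ and $2m+1$, whose minimum is $m-1$. You merely spell out the details (including the count of oriented $3$-cycles) more explicitly than the paper does.
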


We need the following lemma which compute the possible weight values of a $k$-cycle.

\begin{lema}\label{lemma-gaps}
Let $Q$ be an $m$-coloured quiver in the mutation class of $\mathbb{A}_{n}$ and $K=\{x_1,\ldots, x_k\}$ be a $k$-clique in $Q$. Let $A=\{m-1,2m+1\}$. Then, $\overline{(x_1\cdots x_k)} \in (k-2)A-(k-3)m .$ 

\end{lema}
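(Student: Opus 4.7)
The plan is to triangulate the $k$-cycle using diagonals from a fixed vertex and reduce the computation to the triangle case treated in Proposition \ref{prop-k3}.

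Specifically, I would fix the vertex $x_1$ and consider the $k-2$ triangles
$$T_i := (x_1 x_{i} x_{i+1}), \qquad i = 2, 3, \ldots, k-1,$$
each of which is a $3$-cycle contained in $K$ (since $K$ is a clique). By Proposition \ref{prop-k3}, we have $\overline{T_i} \in A$ for every $i$, and therefore $\sum_{i=2}^{k-1} \overline{T_i} \in (k-2)A$ by definition of the Minkowski sum.

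Next, I would compute the sum $S := \sum_{i=2}^{k-1}\overline{T_i}$ in terms of the weight of the cycle $(x_1 \cdots x_k)$. Each edge $\overline{x_j x_{j+1}}$ of the cycle (with indices modulo $k$, so including $\overline{x_k x_1}$) appears exactly once in this sum: the ``outer'' edge of $T_2$ contributes $\overline{x_1 x_2}$, the ``outer'' edge of $T_{k-1}$ contributes $\overline{x_k x_1}$, and the remaining outer edges $\overline{x_i x_{i+1}}$ for $2 \le i \le k-1$ come from $T_i$. All other terms appearing in $S$ come from the diagonals $x_1 x_i$ for $3 \le i \le k-1$, and each such diagonal contributes $\overline{x_1 x_i} + \overline{x_i x_1}$ to $S$. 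Using skew-symmetry of $Q$, this is exactly $m$ for each of the $k-3$ diagonals. Hence
$$S = \overline{(x_1 x_2 \cdots x_k)} + (k-3)m.$$
Rearranging gives $\overline{(x_1 x_2 \cdots x_k)} = S - (k-3)m \in (k-2)A - (k-3)m$, which is the desired conclusion.

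The main step is really just the bookkeeping in the triangulation identity; there is no genuine obstacle, since Proposition \ref{prop-k3} does the hard work in the base case $k=3$ (which the formula indeed recovers, giving $\overline{(x_1 x_2 x_3)} \in A = 1 \cdot A - 0 \cdot m$). One should double-check the count of diagonals: a triangulation of a $k$-gon from a single vertex uses exactly $k-3$ diagonals and produces exactly $k-2$ triangles, which is consistent with the claim. Beyond this combinatorial care, the argument is routine.
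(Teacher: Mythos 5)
Your proof is correct and follows essentially the same route as the paper: triangulating the $k$-cycle from the fixed vertex $x_1$ into $k-2$ triangles, using that each triangle's weight lies in $A$ (Proposition \ref{prop-k3}, equivalently Remark \ref{Remark color triangulo}), and using skew-symmetry so that each of the $k-3$ diagonals contributes $m$. Your bookkeeping is in fact slightly more careful than the paper's, whose displayed sum runs to $k-2$ instead of $k-1$ (an index typo), but the argument is identical in substance.
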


\begin{proof}
Since $Q$ is skew-symmetric,  $\overline{x_{i+1}x_{1}}+\overline{x_1x_{i+1}}=m$ for all $i : 2\leq i \leq n-2$. Then, the $k$-cycle $(x_1\cdots x_k)$ has weight $\overline{(x_1\cdots x_k)} = \sum_{i=2}^{k-2} \overline{(x_1x_{i}x_{i+1})} - (k-3)m \in (k-2)A-(k-3)m$.

\end{proof}

\begin{obs}
We have $(k-2)A-(k-3)m=\{(k-2-i)(m-1)+i(2m+1)-(k-3)m: 0\leq i \leq k-2\}$. With $i=0$ we obtain $(k-2)(m-1)-(k-3)m = m+2-k$, the minimum possible value of $(k-2)A-(k-3)m$, and with $i=1$ we have $(k-3)(m-1)+(2m+1)-(k-3)m = 2m+4-k$, the second minimum possible value.

\end{obs}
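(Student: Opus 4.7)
The plan is to unpack the Minkowski sum definition directly and then observe that the resulting set is an arithmetic progression, from which the identification of the minimum and second minimum is immediate.

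First, I would compute $(k-2)A$ explicitly. By the definition of the iterated Minkowski sum, an element of $(k-2)A$ is a sum $a_1 + a_2 + \cdots + a_{k-2}$ where each $a_j \in \{m-1, 2m+1\}$. Since the summands commute, such a sum is determined by the number $i$ of indices $j$ for which $a_j = 2m+1$ (equivalently, $k-2-i$ indices give $a_j = m-1$), where $0 \leq i \leq k-2$. This yields the description
\[
(k-2)A = \{(k-2-i)(m-1)+i(2m+1) : 0\leq i \leq k-2\},
\]
and translating by $-(k-3)m$ gives the stated description of $(k-2)A-(k-3)m$.

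Next, I would compute the two specific values. For $i=0$, a direct expansion gives $(k-2)(m-1)-(k-3)m = (k-2)m-(k-2)-(k-3)m = m-(k-2) = m+2-k$. For $i=1$, similarly $(k-3)(m-1)+(2m+1)-(k-3)m = -(k-3)+2m+1 = 2m+4-k$.

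Finally, to see that these are indeed the minimum and second minimum, I would note that the difference between the values at consecutive indices $i$ and $i+1$ is $(2m+1)-(m-1)=m+2$, independent of $i$. Hence the set is an arithmetic progression of $k-1$ terms with common difference $m+2$. Since $m\geq 1$ implies $m+2>0$, the progression is strictly increasing in $i$, so the minimum is attained at $i=0$ (value $m+2-k$) and the second minimum at $i=1$ (value $2m+4-k$). No step here is a genuine obstacle; the only care needed is the bookkeeping of the expansion of $(k-2)(m-1)-(k-3)m$, which I have verified above.
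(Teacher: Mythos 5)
Your proposal is correct and follows exactly the routine verification that the paper leaves implicit (the remark is stated without proof): expanding the iterated Minkowski sum by the number of summands equal to $2m+1$, checking the two expansions, and noting the values form a strictly increasing arithmetic progression with common difference $m+2$. Nothing is missing.
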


In particular, we have the following corollary. 

\begin{coro}
Let $Q$ be an $m$-coloured quiver in the mutation class of $\mathbb{A}_{n}$ and $K=\{x_1,\ldots, x_k\}$ be a $k$-clique in $Q$ such that $\overline{(x_1\cdots x_k)}<2m+4-k$, then  $\overline{(x_1\cdots x_k)}=m+2-k$.
\end{coro}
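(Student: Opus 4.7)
The plan is to derive the corollary almost immediately from Lemma \ref{lemma-gaps} together with the observation that follows it. By Lemma \ref{lemma-gaps}, the weight $\overline{(x_1\cdots x_k)}$ lies in the set $(k-2)A - (k-3)m$ where $A=\{m-1,2m+1\}$. So the entire task reduces to understanding the structure of this set: it is a finite collection of integer values, and we want to show that only one value in it lies strictly below $2m+4-k$.

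Concretely, I would make the parametrization from the remark fully explicit. A generic element of $(k-2)A-(k-3)m$ has the form $(k-2-i)(m-1)+i(2m+1)-(k-3)m$ for $0\leq i \leq k-2$. A direct simplification collapses this to $m+2-k + i(m+2)$. Thus
\[
(k-2)A-(k-3)m \;=\; \{m+2-k + i(m+2): 0\leq i\leq k-2\},
\]
i.e.\ an arithmetic progression with common difference $m+2$, whose smallest element is $m+2-k$ and whose second smallest element is $2m+4-k$. Hence any element of the set that is strictly smaller than $2m+4-k$ must equal $m+2-k$.

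Combining these two facts yields the corollary: if $\overline{(x_1\cdots x_k)}<2m+4-k$, then necessarily $\overline{(x_1\cdots x_k)}=m+2-k$. I expect no real obstacle here; the only thing to be careful about is the algebraic simplification of the parametrization, but that is a one-line computation. The conceptual content was already done in Lemma \ref{lemma-gaps} (which reduces a $k$-cycle weight to a sum of $3$-cycle weights via skew-symmetry and Proposition \ref{prop-k3}), so the corollary is essentially an observation about a gap in the admissible values.
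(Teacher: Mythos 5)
Your proposal is correct and follows essentially the same route as the paper: the paper's remark after Lemma \ref{lemma-gaps} already identifies $m+2-k$ (at $i=0$) and $2m+4-k$ (at $i=1$) as the two smallest values of $(k-2)A-(k-3)m$, and the corollary is read off from that. Your explicit simplification to the arithmetic progression $\{m+2-k+i(m+2):0\leq i\leq k-2\}$ is a clean way to phrase the same observation.
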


Proposition \ref{prop-k3} can be generalized in the following way.\\

\begin{teo}\label{theo-m2}
If $Q$ is an $m$-coloured quiver in the mutation class of $\mathbb{A}_{n}$ and $K$ is a $k$-clique in $Q$ then $\delta(K)=m+2-k$.
\end{teo}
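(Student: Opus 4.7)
The plan is to bound $\delta(K)$ from below using Lemma \ref{lemma-gaps} and to construct an explicit $k$-cycle attaining that lower bound using the rigid colour structure of cliques in $\mathcal{Q}_n^m$ described in Lemma \ref{triangulos admisibles}.

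For the lower bound, since $K$ is a $k$-clique in $Q$ and $Q$ lies in the mutation class of $\mathbb{A}_n$, Lemma \ref{lemma-gaps} applies to any cyclic ordering of the vertices of $K$, so every $k$-cycle inside $K$ has weight belonging to $(k-2)A-(k-3)m$ with $A=\{m-1,2m+1\}$. By the remark following that lemma, the minimum element of this set is $m+2-k$. Consequently $\delta(K)\geq m+2-k$.

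For the upper bound, by the main theorem (Theorem A) we have $Q\in\mathcal{Q}_n^m$, so Lemma \ref{triangulos admisibles} governs every triangle of $K$. Fix a vertex $v\in K$ and let $w_1,\ldots,w_{k-1}$ be the remaining vertices of $K$, with $c_i:=\kappa(vw_i)$. By part (1) of Lemma \ref{triangulos admisibles} the colours $c_1,\ldots,c_{k-1}$ are pairwise distinct, so after relabelling I may assume $c_1<c_2<\cdots<c_{k-1}$. By part (2) of the same lemma, for $i<j$ the arrow $w_i\to w_j$ has colour $c_j-c_i-1$. Consider now the $k$-cycle
\[
C:\; v\longrightarrow w_1\longrightarrow w_2\longrightarrow\cdots\longrightarrow w_{k-1}\longrightarrow v,
\]
whose consecutive arrow colours are $c_1,\ c_2-c_1-1,\ c_3-c_2-1,\ \ldots,\ c_{k-1}-c_{k-2}-1$, and finally $m-c_{k-1}$ (by skew-symmetry). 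Summing telescopically,
\[
\overline{C}=c_1+\sum_{i=1}^{k-2}(c_{i+1}-c_i-1)+(m-c_{k-1})=c_1+(c_{k-1}-c_1)-(k-2)+(m-c_{k-1})=m+2-k.
\]
Hence $\delta(K)\leq m+2-k$, and combined with the previous inequality this gives equality.

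I do not anticipate a genuine obstacle: the lower bound is an immediate consequence of Lemma \ref{lemma-gaps}, and the upper bound is a one-line telescoping computation once Lemma \ref{triangulos admisibles} is invoked to pin down all arrow colours inside $K$ from a single vertex. The only point requiring care is the invocation of Theorem A to guarantee $Q\in\mathcal{Q}_n^m$ (so that Lemma \ref{triangulos admisibles} is actually available); the explicit cycle $C$ above is then the natural candidate minimising the weight because each of its triangles $(v\,w_i\,w_{i+1})$ is oriented with the small colour sum $m-1$.
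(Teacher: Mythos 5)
Your proof is correct, but it follows a genuinely different route from the paper. The paper proves the theorem by induction on $k$: the base case $k=3$ is Proposition \ref{prop-k3}, and for the inductive step it takes a $k$-cycle of weight $m+2-k$ in a $k$-subclique, inserts the extra vertex in each of the $k$ possible positions, and computes the \emph{average} weight of the resulting $(k+1)$-cycles to be strictly less than $2m+3-k$; by Lemma \ref{lemma-gaps} any weight below the second-smallest admissible value $2m+4-(k+1)$ must equal the minimum $m+2-(k+1)$, which forces $\delta(K)=m+2-(k+1)$. You instead split the statement into a lower bound (the same use of Lemma \ref{lemma-gaps}, common to both arguments) and an upper bound obtained by an explicit minimal cycle: ordering the colours $c_1<\cdots<c_{k-1}$ of the arrows out of a fixed vertex and invoking Lemma \ref{triangulos admisibles}(2) to get colour $c_{j}-c_i-1$ on $w_i\to w_j$, after which the weight of $v\to w_1\to\cdots\to w_{k-1}\to v$ telescopes to $m+2-k$. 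Your construction avoids induction and the averaging trick altogether and is in the spirit of Lemma \ref{completo mas grande} and Corollary \ref{ciclo color cero}, extended from $(m+2)$-cliques to arbitrary $k$-cliques; the price is that you need the fine colour structure of $\mathcal{Q}_n^m$, hence an explicit appeal to Theorem A to pass from ``mutation equivalent to $\mathbb{A}_n$'' to ``lies in $\mathcal{Q}_n^m$'' (legitimate here, since Theorem A precedes this section and does not depend on it), whereas the paper's argument only uses the triangle-weight fact and the gap structure of admissible cycle weights. Your version is shorter and more constructive, and as a bonus it exhibits a weight-minimising cycle through any chosen vertex of the clique.
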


\begin{proof}
The result for $k=3$ follows from Proposition \ref{prop-k3}. Now assume that the result is true for some value of $k\geq 3$ and consider a $(k+1)$-clique $K=\{x_1,x_2,\ldots,x_{k+1}\}$ in $Q$. Then, reordering indices if necessary, we can suppose  that $\overline{(x_1x_2\cdots x_k)}=m+2-k$. Now we consider the cycles $c_1=(x_1 x_{k+1}x_2 x_3 \cdots x_k)$, $c_2=(x_1x_2 x_{k+1} x_3 \cdots x_k)$, $\ldots$ , $c_{k}=(x_1x_2 x_3 \cdots x_k x_{k+1})$ and calculate the sum weight of these cycles.
\begin{eqnarray*}
 \sum_{i=1}^{k} \overline{c_i} &=&  \sum_{i=1}^{k} \overline{x_ix_{k+1}} +  \sum_{i=1}^{k} \overline{x_{k+1}x_{i}} + (k-1)(\overline{x_1x_2} + \overline{x_2x_3} + \cdots + \overline{x_{k-1}x_{k}} + \overline{x_{k}x_1} ) \\
                               &=&  mk + (k-1)(m+2-k) \\
                               &=& (2m+3-k)\cdot k - (m+2)
\end{eqnarray*}
Then, the average weight of those cycles is $\frac{1}{k}\sum_{i=1}^{k} \overline{c_i}= 2m+3-k - \frac{m+2}{k}< 2m+3-k$. This implies the existence of some cycle $c_j$ with $1\leq j \leq k$ such that $\overline{c_j}<2m+3-k = 2m+4-(k+1)$. By Lemma \ref{lemma-gaps}, the only possibility is $\overline{c_j}=m+2-(k+1)$ which is the minimum possible weight of a $(k+1)$-cycle. Therefore  the energy of the $(k+1)$-clique $K$ is $\delta(K)=m+2-(k+1)$.


\end{proof}

In particular, we can find an upper bound for the  clique number  $\omega(Q)$ of a quiver $Q$ in the class $\mathcal{Q}_n^m$. Moreover, if $Q'$  is a quiver in the class $\mathcal{Q}_n^m$ with a $(m+2)$-clique $K$, then $\omega(Q')=m+2$. 

\begin{coro}
Let $Q$ be an $m$-coloured quiver in $\mathcal{Q}_n^m$ with clique number $\omega(Q)$. Then, $\omega(Q)\leq m+2$ and this upper bound is reached by a quiver in that class.
\end{coro}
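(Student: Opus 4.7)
The plan is to split the proof into (a) the upper bound and (b) the sharpness claim.

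For (a), I would invoke Theorem~\ref{theo-m2} together with the elementary observation that weights are non-negative. Let $K$ be a maximum clique of $Q$ with $|K|=k=\omega(Q)$. If $k\geq 3$, then by Theorem~\ref{theo-m2} we have $\delta(K)=m+2-k$; and since $\delta(K)$ is defined as the minimum of weights of $k$-cycles in $K$, each of which is a sum of colours from $\{0,1,\ldots,m\}$, we have $\delta(K)\geq 0$, forcing $k\leq m+2$. The case $k\leq 2$ is automatic because $m\geq 1$ implies $m+2\geq 3$.

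For (b), I would exhibit an explicit $(m+2)$-clique realised as a quiver in $\mathcal{Q}_{m+2}^{m}$. On vertices $w_0,w_1,\ldots,w_{m+1}$ let $Q'$ be the complete quiver with colouring
\[
\kappa(w_i\to w_j)=j-i-1 \qquad (0\leq i<j\leq m+1),
\]
extended by skew-symmetry, so that $\kappa(w_j\to w_i)=m-j+i+1$. By construction the Hamiltonian cycle $w_0\to w_1\to\cdots\to w_{m+1}\to w_0$ is coloured entirely by zeros, in accordance with Corollary~\ref{ciclo color cero}. I would then check the two axioms of Definition~\ref{la clase}. Condition~(2) follows from the identity
\[
(j-i-1)+(k-j-1)+\bigl(m-(k-i-1)\bigr)=m-1
\]
along every triangle $w_i,w_j,w_k$ with $i<j<k$. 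For condition~(1), each vertex $v$ has $z=m+1$ neighbours all lying in $Q'_0$, so the decomposition into $\mathcal{C}_{m+2}=Q'_0$ and the trivial $1$-clique $\{v\}$ satisfies $r+k=(m+2)+1=m+3=z+2$ with both sizes bounded by $m+2$, and the ``no arrows between'' clause is vacuous. Finally, $Q'$ has no induced cycle of length $\geq 4$ since it is complete, so $Q'$ is hole-free. Therefore $Q'\in\mathcal{Q}_{m+2}^{m}$ and $\omega(Q')=m+2$.

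The subtlest step is verifying that the explicit example sits inside $\mathcal{Q}_n^m$: in particular, condition~(1) must be interpreted to admit the trivial $1$-clique $\{v\}$ when all of $v$'s neighbours already lie in a single $(m+2)$-clique. Aside from that small interpretive point, the remaining checks reduce to the colour-sum computation above and the routine observation that a complete quiver is free-hole.
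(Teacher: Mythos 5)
Your argument for the bound is exactly the one the paper intends: the corollary is stated as an immediate consequence of Theorem~\ref{theo-m2}, namely $\delta(K)=m+2-k\geq 0$ for any $k$-clique with $k\geq 3$ (using that quivers in $\mathcal{Q}_n^m$ are connected, hence in the mutation class of $\mathbb{A}_n$ by the main theorem), and your handling of $k\leq 2$ is fine. Where you go beyond the paper is the sharpness claim: the paper leaves the existence of an $(m+2)$-clique in the class implicit, whereas you exhibit an explicit witness, the complete quiver on $w_0,\ldots,w_{m+1}$ with $\kappa(w_i\to w_j)=j-i-1$, and verify Definition~\ref{la clase} directly. Your verification is correct: the triangle identity gives condition~(2), the colouring is exactly the one forced by Lemma~\ref{triangulos admisibles} and realises the all-zero Hamiltonian cycle of Corollary~\ref{ciclo color cero}, and a complete quiver is trivially simple, connected and hole-free. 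Your interpretive point about admitting the $1$-clique $\{v\}$ in condition~(1) is also consistent with how the paper itself must read that condition (an endpoint of an $\mathbb{A}_n$-quiver has valency $z=1$, forcing $\{r,k\}=\{1,2\}$, and the ``no arrows between the two cliques'' clause is clearly meant for vertices other than the shared vertex $v$). An alternative, equally short route to sharpness would have been to note that the class is closed under mutation (Lemma~\ref{cerrado por mutaciones}) and that reversing Lemma~\ref{transformar un ciclo extremal de tamaño n en uno de tamaño n-1}, which is legitimate by Lemma~\ref{mutacion invertible}, grows an extremal clique from $\mathbb{A}_{m+2}$ up to size $m+2$; but your explicit construction is a perfectly good, and arguably more transparent, substitute.
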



\section{The $0$-coloured part of a quiver in the mutation class of $\mathbb{A}_n$}

The $0$-coloured part of a quiver in the mutation class of $\mathbb{A}_n$ plays an important role in the study of the $m$-cluster tilted algebra of type $\mathbb{A}_n$. In fact, every quiver of an $m$-cluster tilted algebra of type $\mathbb{A}_n$  can be obtained  by considering the $0$-coloured part of the  quiver in the class $\mathcal{Q}_n^m$. In particular, the lengths of the cycles in the $0$-coloured part bring information about the possible cycles that an $m$-cluster tilted algebra of type $\mathbb{A}_n$ can have.  In \cite{Murphy}, using different tools,  it is proved that every cycle in the $0$-coloured part (if there is any) has length $m+2$. Next we bring a completely combinatorial proof of this result, based on Theorem \ref{theo-m2}.\\

Another important consequence of the above Theorem is the following. If $Q'$ is the $0$-coloured part of an $m$-coloured quiver $Q$ in the mutation class of $\mathbb{A}_{n}$, then $Q'$ admits only cycles of length $m+2$.\\

\begin{coro}\label{los unicos ciclos} Let $Q$ be an $m$-coloured quiver in the mutation class of $\mathbb{A}_{n}$ and consider its $0$-coloured part $Q'$. Suppose there is a $k$-cycle $c=(x_1x_2\cdots x_k)$ in $Q'$. Then, $k=m+2$.
\end{coro}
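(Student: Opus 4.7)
The plan is to combine three ingredients: first, show that $\{x_1,\ldots,x_k\}$ spans a $k$-clique in $Q$; then apply Theorem~\ref{theo-m2} to get $k\geq m+2$; and finally invoke $\omega(Q)\leq m+2$ to get the reverse inequality.

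For the first (and hardest) ingredient I would prove, by induction on $k$, that any cycle of length $k\geq 3$ in the underlying graph of a quiver $Q\in\mathcal{Q}_n^m$ is contained in a single maximal clique of $Q$. Before starting the induction I would record the structural fact that two distinct maximal cliques of $Q$ intersect in at most one vertex: if two maximal cliques shared two distinct vertices $u$ and $v$, then condition~(1) of Definition~\ref{la clase} applied at $u$ would ask for no arrows between the two cliques at $u$, yet $v$ would produce such an arrow, a contradiction. For the base $k=3$ of the induction, three pairwise-adjacent vertices already form a $3$-clique; condition~(1) at $x_1$ forces $x_2$ and $x_3$ to lie in the same maximal clique at $x_1$, and that maximal clique contains all three. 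For the inductive step $k\geq 4$, the free-hole property produces a chord $\{x_i,x_j\}$ of $c$ in $Q$; this chord splits $c$ into two cycles, each of length strictly less than $k$ and at least $3$, to which the inductive hypothesis applies. Both resulting maximal cliques contain the two chord endpoints, so by the structural fact above they coincide, which places the whole vertex set $K:=\{x_1,\ldots,x_k\}$ inside a single maximal clique. Being contained in a complete subquiver, $K$ is itself a $k$-clique.

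With $K$ known to be a clique, Theorem~\ref{theo-m2} gives $\delta(K)=m+2-k$. On the other hand the particular cyclic arrangement $(x_1x_2\cdots x_k)$ of $K$ has weight $\overline{(x_1\cdots x_k)}=0$, since each arrow of $c$ is coloured $0$. Thus $m+2-k=\delta(K)\leq 0$, i.e.\ $k\geq m+2$. Combining this with the corollary of Section~5, which states $\omega(Q)\leq m+2$, and the trivial bound $k\leq\omega(Q)$, we conclude $k=m+2$.

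The main obstacle, as I see it, is the inductive Step~1 together with the structural lemma that two maximal cliques in $\mathcal{Q}_n^m$ meet in at most one vertex. Once this is cleanly extracted from condition~(1) of Definition~\ref{la clase}, the remainder of the argument is a short matter of comparing the weight $0$ against the energy formula $\delta(K)=m+2-k$ and invoking the clique-number bound.
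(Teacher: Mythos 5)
Your proposal is correct and follows essentially the same route as the paper: show that $K=\{x_1,\ldots,x_k\}$ is a $k$-clique in $Q$ and then compare the weight-$0$ cycle with the energy formula $\delta(K)=m+2-k$ of Theorem~\ref{theo-m2}. The paper settles the clique step with the one-line remark that $Q$ has no holes, whereas you justify it via condition~(1) of Definition~\ref{la clase} (distinct maximal cliques meet in at most one vertex), a useful precision since hole-freeness alone would not suffice; your use of $\omega(Q)\leq m+2$ instead of $\delta(K)\geq 0$ for the upper bound is equivalent bookkeeping.
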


\begin{proof}
By our characterization of quivers in the mutation class of $\mathbb{A}_{n}$ we have that the set $K=\{x_1,x_2,\ldots, x_k\}$ is a $k$-clique in $Q$ because $Q$ has not holes. This clique contains the $k$-cycle $c$ which has zero weight (because it is in $Q'$), then $\delta(K)=0$. By Theorem \ref{theo-m2}, $\delta(K)=m+2-k$ and we conclude $k=m+2$.
\end{proof}

We finish this section with a proposition which characterizes  the quivers of an $m$-cluster tilted algebra of type $\mathbb{A}_n$. This proposition has  already been proved in  \cite{Murphy}. However, we can now give an independent argument.

\begin{prop}
Let $Q$ be the quiver of an $m$-cluster tilted algebra of type $\mathbb{A}_n$. Then,

\begin{enumerate}
  \item For every vertex $x\in Q_0$ the sets $s^{-1}(x)$ and $t^{-1}(x)$ have cardinality at most two;
  \item the only possible cycles which can occur in $Q$  are oriented cycles of length $m+2$.
\end{enumerate}

\end{prop}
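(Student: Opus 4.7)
The plan is to deduce both statements from the combinatorial description of $\mathcal{Q}_n^m$ established in this paper. By Buan–Thomas, the Gabriel quiver $Q$ of an $m$-cluster tilted algebra of type $\mathbb{A}_n$ equals the $0$-coloured part of some coloured quiver $\widetilde{Q}$ in the mutation class of $\mathbb{A}_n$, and by the main Theorem A we have $\widetilde{Q}\in \mathcal{Q}_n^m$. So both claims are really statements about the $0$-coloured subquiver of an arbitrary member of $\mathcal{Q}_n^m$.

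For statement (1), I would first invoke condition (1) of Definition \ref{la clase}: every vertex $x$ of $\widetilde{Q}$ lies in at most two cliques $\mathcal{C}_r,\mathcal{C}_k$ that together account for all neighbours of $x$, and no arrows run between the remaining vertices of these two cliques. Next, by Lemma \ref{triangulos admisibles}, inside a single clique all outgoing arrows from $x$ carry pairwise distinct colours. In particular, each clique containing $x$ contributes at most one arrow of colour $0$ starting at $x$, so $|s^{-1}(x)|\le 2$ in the $0$-coloured part. For $|t^{-1}(x)|\le 2$, I would use skew-symmetry: a $0$-coloured arrow entering $x$ corresponds to an arrow of colour $m$ leaving $x$; again by Lemma \ref{triangulos admisibles} each of the (at most two) cliques at $x$ supplies at most one such arrow.

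For statement (2), I would note that a cycle in $Q$ is by the definition in Section~1 an oriented closed path whose initial segment is simple, so the only thing left to prove is that such a cycle has length exactly $m+2$. This is precisely Corollary \ref{los unicos ciclos} applied to $\widetilde{Q}$: the underlying vertex set of any $k$-cycle in the $0$-coloured part is necessarily a $k$-clique of $\widetilde{Q}$ (because $\widetilde{Q}$ is hole-free), and the $k$-cycle itself has zero weight, forcing $\delta(K)=0$ and hence $k=m+2$ via Theorem \ref{theo-m2}.

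I do not anticipate a serious obstacle: the two conclusions are structural corollaries of results already assembled earlier in the paper. The only point that deserves some care is making the skew-symmetry argument for incoming arrows precise, in particular checking that the “at most two cliques at $x$” are the same cliques in both the outgoing and incoming count, which is immediate from condition (1) of Definition \ref{la clase} applied to $x$ in $\widetilde{Q}$.
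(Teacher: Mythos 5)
Your proposal is correct and takes essentially the same route as the paper, which likewise identifies $Q$ with the $0$-coloured part of a quiver in $\mathcal{Q}_n^m$ and deduces (1) from Lemma \ref{triangulos admisibles} and (2) from Corollary \ref{los unicos ciclos}. You merely spell out the details the paper leaves implicit, namely the use of condition (1) of Definition \ref{la clase} and skew-symmetry to bound both $|s^{-1}(x)|$ and $|t^{-1}(x)|$ by two.
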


\begin{proof}

We have to  consider the $0$-coloured part of the quivers in the class $\mathcal{Q}_n^m$. Then, Lemma \ref
{triangulos admisibles} implies $(1)$  and $(2)$ follows directly from Corollary \ref{los unicos ciclos}.
\end{proof}

\medskip

\begin{ejem} If we compute the $0$-coloured part of the $2$-coloured quiver  of Figure \ref{Q132}, we obtain a non connected quiver of a $2$-cluster tilted algebra of type  $\mathbb{A}_{13}$.

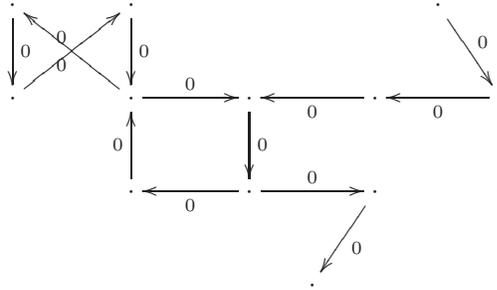
\begin{figure}[H]
\begin{center}
\[
\xymatrix@R=25pt@C=15pt{ \cdot \ar[d]^0 && \cdot \ar[d]^0 && &&  & \cdot \ar[dr]^0 & \\
\cdot \ar[rru]^{0} && \cdot  \ar[rr]^0 \ar[llu]^{ 0 }   && \cdot \ar[d]^0 && \cdot \ar[ll]^0 && \cdot \ar[ll]^0 \\
&& \cdot \ar[u]^0  && \cdot  \ar[ll]^0 \ar[rr]^0 && \cdot  \ar[dl]^0 && \cdot \\
&& && & \cdot  & &&
}
\]

\caption{The $0$-coloured part of the $2$-coloured quiver  of figure \ref{Q132} }
\end{center}
\end{figure}

\end{ejem}


\begin{thebibliography}{99}

\bibitem{Ba} J. Bastian, \textit{Mutation classes of $\tilde{A}_n$-quivers and derived equivalence classification of cluster tilted algebras of type $\tilde{A}_n$,}
  Algebra and Number Theory 5:567-594, 2009.
  
  \bibitem{BMR} A. Buan, B. Marsh, I. Reiten, \textit{Cluster mutation via quiver representations,} Comment. Math. Helv. 83 (1): 143–177, 2008. 
  
  \bibitem{BPRS} J. Bastian, T. Prellberg, M. Rubey and C. Stump, 
   \textit{Counting the Number of Elements in the Mutation Classes of $A_n$-Quivers},
  Electron. J. Comb. 18, 2009.
  
  \bibitem{BR} A. Buan, I. Reiten, \textit{Acyclic quivers of finite mutation type,} Int. Math. Res. Not., 2006.
  
   \bibitem{BTo}A. Buan, H.Torkildsen, \textit{The number of elements in the mutation class of a quiver of type $D_n$}. Electron. J. Combin. 16(1), 2009.

\bibitem{BT} A. Buan, H. Thomas, \textit{Coloured quiver mutation for higher cluster categories.} Adv. Math. 222(3):971-995, 2009.

\bibitem{BV} A. Buan, D. Vatne, \textit{Derived equivalence classification for cluster-tilted algebras of type $A_n$}, J. Algebra 319 (7): 2723-2738, 2008.


\bibitem{FR05} S. Fomin, N. Reading, \textit{Generalized cluster complexes and Coxeter combinatorics}. Int. Math.
Res. Not. 44: 2709–2757, 2005.

\bibitem{FZ} S. Fomin, A. Zelevinsky, \textit{Cluster algebras I: Foundations}, J. Amer. Math. Soc. 15 (2): 497-529, 2002.

\bibitem{Murphy} G.J. Murphy, \textit{Derived equivalence classification of m-cluster tilted algebras of type An}, J. Algebra 323 (4): 920-965, 2010.


\bibitem{T07} H. Thomas. \textit{Defining an m-cluster category.} Journal of Algebra, 318(1):37-46, 2007.

\bibitem{Tork}  H. Torkildsen, \textit{Finite mutation classes of coloured quivers.} Colloq. Math. 122 (1):53-58, 2011.

\bibitem{Tork2}  H. Torkildsen, \textit{Colored quivers of type A and the cell-growth problem.} J. Algebra Appl. 12 (1), 2013.

\bibitem{Va} D. Vatne, \textit{The mutation class of Dn quivers.} Comm. Algebra 38 (3): 1137-1146, 2010.


\bibitem{Zhou} Y. Zhou,  B. Zhu, \textit{Cluster combinatorics of $d$-cluster categories.} J. Algebra 321: 2898-2915, 2009.




\end{thebibliography}
\end{document}